\crefname{section}{Section}{Sections}
\crefname{subsection}{\S}{\S\S}
\theoremstyle{plain}
\newtheorem{lemma}{Lemma}[section]
\newtheorem{proposition}[lemma]{Proposition}
\newtheorem{corollary}[lemma]{Corollary}
\newtheorem{theorem}[lemma]{Theorem}
\theoremstyle{nonumberplain}
\newtheorem{theoremN}{Theorem}
\theoremstyle{plain}
\newtheorem{definition}[lemma]{Definition}
\newtheorem{example}[lemma]{Example}
\newtheorem{remark}[lemma]{Remark}
\crefname{definition}{definition}{definitions}
\crefname{ex}{example}{examples}
\crefname{remark}{remark}{remarks}
\crefname{convention}{convention}{conventions}
\crefname{lemma}{lemma}{lemmas}
\crefname{proposition}{proposition}{propositions}
\crefname{corollary}{corollary}{corollaries}
\crefname{theorem}{theorem}{theorems}
\crefname{assumption}{assumption}{Assumptions}
\crefname{equation}{}{}
\theoremstyle{nonumberplain}
\newtheorem{proof}{Proof}
\newtheorem{proof_of_colim}{Proof of \Cref{pr.colim}}
\newtheorem{proof_of_RFDFP}{Proof of \Cref{th.RFD_FP}}
\newtheorem{proof_of_tf}{Proof of \Cref{th.tf}}
\newtheorem{proof_of_brf}{Proof of \Cref{th.bicr-rf}}
\newcommand\bC{\mathbb C}
\newcommand\bG{\mathbb G}
\newcommand\bZ{\mathbb Z}
\newcommand\cA{\mathcal A}
\newcommand\cB{\mathcal B}
\newcommand\cC{\mathcal C}
\newcommand\cP{\mathcal P}
\DeclareMathOperator{\id}{id}
\DeclareMathOperator{\ir}{\mathrm{Irred}}
\DeclareMathOperator{\tr}{\mathrm{tr}}
\DeclareMathOperator{\Tr}{\mathrm{Tr}}
\newcommand{\qedhere}{\mbox{}\hfill\ensuremath{\blacksquare}}
\title{\bf Property (T), property (F) and residual finiteness for \\ 
	discrete quantum groups} 
\author{Angshuman Bhattacharya\footnote{Department of Mathematics, Indian Institute of Science Education and Research Bhopal, Indore By-pass Road, Bhauri Bhopal, 462066, Madhya Pradesh, India \url{angshu@iiserb.ac.in}}, 
	Michael Brannan\footnote{Department of Mathematics, Texas A\&M University, College Station, TX 77843, USA \url{mbrannan@math.tamu.edu}}, \\ 
	Alexandru Chirvasitu\footnote{Department of Mathematics, University at Buffalo, Buffalo, NY 14260, USA \url{achirvas@buffalo.edu}}, 
	Shuzhou Wang\footnote{Department of Mathematics, University of Georgia, Athens, GA 30602, USA \url{szwang@uga.edu}}}
\begin{document}

\date{}

\maketitle

\begin{abstract}
  We investigate connections between various rigidity and softness properties for discrete quantum groups. After introducing a notion of residual finiteness, we show that it implies the Kirchberg factorization property for the discrete quantum group in question. We also prove the analogue of Kirchberg's theorem, to the effect that conversely, the factorization property and property (T) jointly imply residual finiteness. We also apply these results to certain classes of discrete quantum groups obtained by means of bicrossed product constructions and study the preservation of the properties (factorization, residual finiteness, property (T)) under extensions of discrete quantum groups.
\end{abstract}

\noindent {\em Key words: discrete quantum group, compact quantum
  group, factorization property, property (T), residually finite, Kac
  type, bicrossed product}

\vspace{.5cm}

\noindent{MSC 2010: 20G42; 46L52; 16T20}

\tableofcontents

\section*{Introduction}

The theory of compact quantum groups initiated by Woronowicz in \cite{Wor87} has proven very flexible and amenable to treatment from multiple perspectives. The objects in loc. cit. are (particularly well-behaved) Hopf algebras, usually regarded as function algebras on the non-commutative geometer's version of a compact group. On the other hand, by Pontryagin duality for quantum groups, the same algebras are group algebras of their {\it discrete} quantum group duals \cite{PoWo90,BS93,EfRu,dae-discr}. We adopt the latter perspective herein, studying the interaction between the quantum versions of several properties of interest in the representation theory of discrete groups and in geometric group theory.

The motivating classical result for the present note is the main result of Kirchberg in \cite[Theorem 1.1]{kirch}, stating that a discrete group with property (T) and property (F) (also known as the {\it factorization property}) is residually finite. Our main result here is a quantum version thereof (see \Cref{th.tf}).

\begin{theoremN}
{\bf 1}
A discrete quantum group with property (T) and property (F) is residually finite.
\end{theoremN}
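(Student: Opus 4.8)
The strategy is to transpose Kirchberg's argument from \cite{kirch} into the quantum setting, retaining its three moving parts: the Kazhdan projection furnished by property (T), the factorization through a minimal tensor product furnished by the factorization property, and the adjoint representation with its canonical Haar-invariant vector. Write $\widehat{\mathbb{G}}$ for the discrete quantum group of the statement, $\mathbb{G}$ for its compact dual, $C_u(\mathbb{G})$ and $C_r(\mathbb{G})$ for the universal and reduced $C^*$-algebras, $h$ and $\epsilon$ for the Haar state and counit, and $L^2(\mathbb{G})$ for the GNS space of $h$; the Kac-type hypothesis in force throughout the paper makes $h$ tracial, and this is exactly where one classically uses the canonical trace on $C^*_\lambda(\Gamma)$. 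One first disposes of the amenable case: as in the classical situation an amenable discrete quantum group with property (T) is finite, hence residually finite, so one may assume $\widehat{\mathbb{G}}$ is not amenable, equivalently that $h$ annihilates the Kazhdan projection.

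The first step produces, from property (T), a central self-adjoint projection $p\in C_u(\mathbb{G})$ with $U(p)$ equal to the orthogonal projection onto the $\widehat{\mathbb{G}}$-fixed vectors of every unitary representation $U$; in particular $\epsilon(p)=1$, whereas the regular representation $\lambda\colon C_u(\mathbb{G})\to B(L^2(\mathbb{G}))$ satisfies $\lambda(p)=0$ by the reduction above, so $p$ is a nonzero projection that vanishes in $C_r(\mathbb{G})$, and $pC_u(\mathbb{G})p=\mathbb{C}p$ while $C_u(\mathbb{G})=\mathbb{C}p\oplus\ker\epsilon$ as $C^*$-algebras. The second step uses the factorization property to obtain a $*$-homomorphism $\phi$ from a minimal tensor product of two quantum group $C^*$-algebras into $B(L^2(\mathbb{G}))$ implementing the commuting pair of left and right regular representations of $\widehat{\mathbb{G}}$; composing $\phi$ with the comultiplication yields the adjoint representation $\mathrm{ad}$ on $L^2(\mathbb{G})$, for which the Haar vector $\xi_h$ is invariant, so $\mathrm{ad}(p)\xi_h=\xi_h$ and in particular $\mathrm{ad}(p)\neq 0$. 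Thus $\phi$ kills $p\otimes 1$ and $1\otimes p$, hence descends to $\ker\epsilon\otimes_{\min}\ker\epsilon$, yet the image of $p$ under the induced diagonal map $C_u(\mathbb{G})\to\ker\epsilon\otimes_{\min}\ker\epsilon$ is a nonzero positive element, detected by $\mathrm{ad}(p)$ (the point being that the comultiplication of $p$ is not $p\otimes p$, so it does not lie in the killed part).

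The third step is the heart of the matter: from this one must squeeze out a \emph{finite-dimensional} representation of $\widehat{\mathbb{G}}$. The mechanism is the usual property-(T) one: the spectral gap forces the approximately invariant vectors produced by the minimal-tensor-product factorization to be genuine invariant vectors, and a minimal-projection / dimension-counting analysis at the level of representations of $\ker\epsilon\otimes_{\min}\ker\epsilon$ then pins the relevant representations of $\widehat{\mathbb{G}}$ down to finite-dimensional ones; running this over a point-separating supply of such representations shows that finite-dimensional representations of $\widehat{\mathbb{G}}$ separate points, which is residual finiteness. I expect essentially all the difficulty to reside here, since the classical argument at this point rests on concrete representation theory of product groups — Fell's absorption principle, the structure of quasi-regular and induced representations, and the description of irreducible representations of a product as tensor products — and the quantum counterparts (controlling which representations of a minimal tensor product of quantum group $C^*$-algebras detect a given central projection, and how the trivial representation sits inside the adjoint representation) must be established by hand, with the duality bookkeeping between $\mathbb{G}$, $\widehat{\mathbb{G}}$ and their opposites tracked carefully.
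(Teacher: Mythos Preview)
Your outline follows Kirchberg's original 1994 strategy (Kazhdan projection, factorization through the minimal tensor product, adjoint representation), whereas the paper takes a different and more direct route via the \emph{UCP-map characterization} of an amenable trace (condition (2) in \Cref{thm-f}). In the paper's argument one starts from a net of UCP maps $\varphi_k:\cA\to M_{n_k}$ that are asymptotically multiplicative in $\|\cdot\|_{2,n_k}$ and approximate the Haar trace pointwise; writing $\varphi_k(\cdot)=T_k^*\pi(\cdot)T_k$ for a universal representation $\pi$ and isometries $T_k$, the normalized finite-rank projections $w_k=T_kT_k^*/\sqrt{n_k}\in H\otimes H^*\cong\mathcal{HS}(H)$ are shown to be almost invariant for $\pi\otimes\pi^*$. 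Property (T) then supplies genuinely invariant Hilbert--Schmidt operators $w$ close to the $w_k$, which are $\pi$-intertwiners, and a spectral truncation makes them finite rank. The representations $\cA\to B(wH)$ so obtained separate points because the $\varphi_k$ recover the faithful Haar trace in the limit. The key advantage of this approach is that finite-dimensionality is built in from the outset: the matrix algebras $M_{n_k}$ and the finite-rank projections $q_k$ are handed to you by the amenable-trace characterization, so no after-the-fact extraction of finite-dimensional pieces is needed.

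By contrast, your proposal has a genuine gap precisely at the point you yourself flag. Steps one and two are fine, but in step three you do not actually produce any finite-dimensional representations; you only assert that ``the spectral gap forces the approximately invariant vectors \ldots\ to be genuine invariant vectors, and a minimal-projection / dimension-counting analysis \ldots\ then pins the relevant representations \ldots\ down to finite-dimensional ones'', and then list the classical ingredients (Fell absorption, induced and quasi-regular representations, tensor decomposition of irreducibles of a product) whose quantum analogues would have to be developed. That is not a proof but a wish list, and several of those tools have no straightforward discrete-quantum-group substitute. In particular, knowing that $\mathrm{ad}(p)\neq 0$ on $L^2(\bG)$ and that $\phi$ factors through $\ker\varepsilon\otimes_{\min}\ker\varepsilon$ gives you a nonvanishing element in a minimal tensor product, but it does not by itself hand you a finite-dimensional $*$-representation of $\cA$, nor a separating family of them. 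The paper's route via condition (2) of \Cref{thm-f} is exactly designed to bypass this: rather than working with the abstract factorization through $\otimes_{\min}$ and then trying to carve out finite-dimensional subrepresentations, it starts from the finite-dimensional approximants $\varphi_k$ and uses property (T) only to upgrade approximate intertwiners to honest (compact, hence finite-rank-approximable) intertwiners.
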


This is accompanied by another analogue of a classical result (\Cref{th.RFD_FP}):

\begin{theoremN}
{\bf 2}
  Residually finite discrete quantum groups have property (F).
\end{theoremN}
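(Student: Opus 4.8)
The plan is to reduce property (F) for $\widehat{\bG}$ to a single norm estimate on an algebraic tensor product, and then to verify that estimate by pushing it down to the finite quantum subgroups supplied by residual finiteness, where it holds automatically. Writing $\bG$ for the compact dual: unwinding the definition, $\widehat{\bG}$ has the factorization property exactly when the $*$-homomorphism $\Theta\colon C_u(\bG)\otimes_{\mathrm{max}}C_u(\bG)\to B(L^2(\bG))$ given on generators by $a\otimes b\mapsto\lambda(a)\rho(b)$ --- where $\lambda$ and $\rho$ are the left and right regular representations of $\widehat{\bG}$, so that $\lambda(C_u(\bG))=C_r(\bG)$ and $\rho(C_u(\bG))$ lies in its commutant --- factors through the canonical quotient onto $C_u(\bG)\otimes_{\mathrm{min}}C_u(\bG)$. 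Since $\cO(\bG)\odot\cO(\bG)$ is dense in the minimal tensor product, it suffices to prove that $\|\Theta(z)\|_{B(L^2(\bG))}\le\|z\|_{C_u(\bG)\otimes_{\mathrm{min}}C_u(\bG)}$ for every $z\in\cO(\bG)\odot\cO(\bG)$.

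Residual finiteness of $\widehat{\bG}$ furnishes a separating family of Hopf $*$-algebra surjections $\pi_i\colon\cO(\bG)\twoheadrightarrow\cO(\bH_i)$ onto finite-dimensional quotients (the finite quantum subgroups $\bH_i\le\bG$), that is, $\bigcap_i\ker\pi_i=\{0\}$; since an intersection of two such kernels is again a finite-codimension Hopf $*$-ideal, I may assume the index set directed and the $\pi_i$ compatible. Set $z_i:=(\pi_i\odot\pi_i)(z)$ and let $\Theta_i$ be the analogue of $\Theta$ for $\widehat{\bH_i}$. Now $C_u(\bH_i)=\cO(\bH_i)$ is finite-dimensional, hence nuclear, so $\cO(\bH_i)\otimes_{\mathrm{max}}\cO(\bH_i)=\cO(\bH_i)\otimes_{\mathrm{min}}\cO(\bH_i)$ and $\Theta_i$ factors through the minimal tensor product automatically; combined with functoriality of $\otimes_{\mathrm{min}}$ along $\pi_i\otimes\pi_i$, this gives $\|\Theta_i(z_i)\|\le\|z_i\|_{\mathrm{min}}\le\|z\|_{C_u(\bG)\otimes_{\mathrm{min}}C_u(\bG)}$ for every $i$. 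So the whole problem reduces to the inequality $\|\Theta(z)\|\le\sup_i\|\Theta_i(z_i)\|$.

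The hard part --- and, I expect, the main obstacle --- is showing that the maps $\pi_i$ become asymptotically isometric as maps from $\cO(\bG)\subseteq L^2(\bG)$ to $\cO(\bH_i)=L^2(\bH_i)$, equivalently that the pulled-back Haar states $h_{\bH_i}\circ\pi_i$ converge to the Haar state $h_\bG$ pointwise on $\cO(\bG)$ (a reformulation of residual finiteness that may well already be on record). One checks this on the matrix coefficients $u^\alpha_{kl}$ of an irreducible corepresentation $\alpha$: the trivial $\alpha$ is clear, and for non-trivial $\alpha$ the key point is that once $i$ is large enough for $\pi_i$ to be injective on the finite-dimensional span of $\{u^\alpha_{kl}\}_{k,l}$ --- which occurs eventually, since $\bigcap_i\ker\pi_i=\{0\}$ --- the images $\pi_i(u^\alpha_{kl})$ span a space of dimension $(\dim\alpha)^2$; as matrix coefficients of inequivalent irreducibles of the finite quantum group $\bH_i$ are linearly independent and those of a single irreducible $\beta$ span only $(\dim\beta)^2$ dimensions, this forces $u^\alpha|_{\bH_i}$ to be a \emph{single}, hence non-trivial, irreducible corepresentation of $\bH_i$. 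Therefore $h_{\bH_i}(\pi_i(u^\alpha_{kl}))=0=h_\bG(u^\alpha_{kl})$ for all large $i$, so $\pi_i$ restricts to an isometry on any prescribed finite-dimensional subspace of $\cO(\bG)$ once $i$ is large.

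To conclude, realize the regular representations at the Hopf-algebra level --- left and right multiplication, the latter twisted by the unitary antipode --- so that $\lambda$ and $\rho$ preserve $\cO(\bG)$ and $\pi_i$ intertwines the regular representations of $\widehat{\bG}$ with those of $\widehat{\bH_i}$; hence $\pi_i(\Theta(z)\xi)=\Theta_i(z_i)\pi_i(\xi)$ for $\xi\in\cO(\bG)$. Given $\xi,\eta\in\cO(\bG)$ with $\|\xi\|_2,\|\eta\|_2\le 1$ (which suffice to compute $\|\Theta(z)\|$, by density of $\cO(\bG)$ in $L^2(\bG)$), the previous paragraph gives, for $i$ large, $\|\pi_i(\xi)\|_2,\|\pi_i(\eta)\|_2\le 1$ together with $\langle\Theta(z)\xi,\eta\rangle_{L^2(\bG)}=\langle\Theta_i(z_i)\pi_i(\xi),\pi_i(\eta)\rangle_{L^2(\bH_i)}$, whose modulus is at most $\|\Theta_i(z_i)\|\le\|z\|_{C_u(\bG)\otimes_{\mathrm{min}}C_u(\bG)}$; taking the supremum over $\xi,\eta$ yields $\|\Theta(z)\|\le\|z\|_{C_u(\bG)\otimes_{\mathrm{min}}C_u(\bG)}$, as required. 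The one point I would handle with care --- without it affecting the shape of the argument --- is the precise form of $\rho$, and of the $L^2$-inner product, when $\bG$ is not of Kac type (a modular twist involving the unitary antipode), checking that both the intertwining identity and the isometry statement survive it; everything else is a direct quantization of Kirchberg's original argument, with the genuinely new content confined to the corepresentation-restriction step above.
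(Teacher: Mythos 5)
Your argument starts from the wrong notion of residual finiteness, and this is the decisive gap. \Cref{def.rf} only asks that the CQG algebra $\cA=\cA(G)$ admit a separating family of finite-dimensional $*$-representations \emph{as a $*$-algebra}; it does not supply Hopf $*$-algebra surjections $\pi_i:\cA\to\cA(\bH_i)$ onto finite-dimensional Hopf $*$-algebras, i.e.\ finite quantum subgroups of the compact dual. The remark following \Cref{def.rf} explicitly lists the existence of such a separating family of finite-dimensional Hopf quotients as an a priori strictly stronger condition (its item (1)), deliberately \emph{not} adopted as the definition. Everything in your proof after the first paragraph --- restricting irreducible corepresentations to the $\bH_i$, convergence of the pulled-back Haar states $h_{\bH_i}\circ\pi_i$, and the intertwining $\pi_i(\Theta(z)\xi)=\Theta_i(z_i)\pi_i(\xi)$ --- uses the Hopf structure and the Haar state of the quotients, none of which exist for a general finite-dimensional $*$-representation of $\cA$. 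So what you prove is property (F) under the stronger hypothesis (1) of that remark, not \Cref{th.RFD_FP}.

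Even granting the stronger hypothesis, two steps are problematic. The passage to a directed, compatible family by intersecting kernels fails: the intersection of two finite-codimension Hopf $*$-ideals is a $*$-ideal stable under the antipode and killed by the counit, but in general it is \emph{not} a coideal, hence not a Hopf ideal; dually, two finite quantum subgroups of $\bG$ need not be contained in a common finite quantum subgroup (unlike the classical discrete-group situation, where $\Gamma\to F_1\times F_2$ provides the refinement). Without directedness, the claim that $\pi_i$ is ``eventually'' injective on a prescribed finite-dimensional subspace has no basis. Separately, for a non-trivial one-dimensional $x\in\ir(G)$ your dimension count does not rule out $\pi_i(u^x)=1$, so you would need injectivity on $\bC 1+C^x$ rather than on $C^x$ alone. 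For contrast, the paper's proof avoids quotient quantum groups entirely: it reduces to the finitely generated case via \Cref{pr.colim_gen}, uses RFD to embed $\cA$ into a countable product of matrix algebras, forms the faithful amenable trace $\tau=\sum_k\alpha_k\tr_{n_k}$ there, recovers the Haar state as a Ces\`aro limit of convolution powers of $\tau|_{\cA}$ (Woronowicz's argument), and concludes from the stability of amenable traces under restriction, convolution, convex combinations and weak$^*$ limits. If you want to salvage your route, you must either work with the paper's weaker definition or prove separately that it implies the existence of enough finite-dimensional Hopf quotients, which is precisely the open-ended comparison the paper's remark cautions about.
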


Theorem 2, together with the main theorem in Chirvasitu \cite{chi-rf},  
implies the main results in both Bhattacharya-Wang \cite{BW}
and Brannan-Collins-Vergnioux \cite{bcv}, which state that, respectively, the discrete duals of 
the universal unitary quantum groups $U_n^+$ and orthogonal quantum groups $O_n^+$ 
have Kirchberg's property (F) and (therefore) the Connes embedding property when $n \neq 3$,  
though it is believed that the same assertions hold for $n=3$ as well. 
  
We will now unpack the ingredients going into Kirchberg's original
result and its quantum version in Theorem 1 and the related Theorem 2 above, with a more detailed exposition below. First, the property (T) for locally compact groups was originally introduced in \cite{kazhdan} 
and it has had far reaching impact in group theory, ergodic theory and operator algebras. 
Some of these achievements can be found in excellent references \cite{hv-t,t,zim}.  
Property (T) is a representation-theoretic {\it rigidity}
property, to the effect that, for a discrete group, the trivial representation 
 is isolated in the set of all irreducible representations 
with respect to the topology of pointwise convergence for the associated positive definite 
functions; it has several equivalent formulations 
(see the above-cited references and the recollection in \Cref{se.prel} below).

Property (T) was adapted to the setting of discrete {\it quantum}
groups in \cite{fim} and its meaning in the statement of Theorem 1 above is taken in 
this new context. We note that a discrete quantum group with property (T) necessitates that its  
antipode be bounded, a condition which is equivalent to the discrete quantum group being unimodular. 
Property (T) has been discussed in this context and the more general locally compact quantum 
setting in a number of other works (e.g. \cite{ky,ky-so, DFSW16,chen-ng,dsv, BrKe17}).

The second half of the hypothesis, property (F), of Theorem 1 is sometimes also
referred to as the {\it factorization property} (or indeed the
Kirchberg factorization property). It was introduced for locally compact groups 
by Kirchberg in \cite{kirch-orig} and further studied in \cite{kirch-ss,kirch}. This 
property amounts, for a discrete group $\Gamma$, to requiring that the
representation
\begin{equation*}
  \begin{tikzpicture}[auto,baseline=(current  bounding  box.center)]
    \path[anchor=base] (0,0) node (cc) {$C^*(\Gamma)\otimes_{\mathrm{max}}C^*(\Gamma)^{op}$} +(4,0) node (b) {$B(\ell^2(\Gamma))$};
    \draw[->] (cc) -- (b);
  \end{tikzpicture}
\end{equation*}
resulting from the left and right translation actions of $\Gamma$ on
itself factors as 
\begin{equation*}
  \begin{tikzpicture}[auto,baseline=(current  bounding  box.center)]
    \path[anchor=base] (0,0) node (cc) {$C^*(\Gamma)\otimes_{\mathrm{max}}C^*(\Gamma)^{op}$} +(8,0) node (b) {$B(\ell^2(\Gamma))$} +(4,-.5) node (min) {$C^*(\Gamma)\otimes_{\mathrm{min}}C^*(\Gamma)^{op}$};
    \draw[->] (cc) to[bend left=6] (b);
    \draw[->] (cc) to[bend right=6] (min);
    \draw[->] (min) to[bend right=6] (b);
  \end{tikzpicture}
\end{equation*}
(where $C^*(\Gamma)$ denotes the {\it full} group $C^*$-algebra). The
property has several alternative characterizations; among them is the
requirement that the group admits a ``sufficiently large'' family of
unital completely positive (UCP) maps  $C^*(\Gamma)\to M_n(\bC)$ for 
increasing $n$ that are ``almost representations'' (see
e.g. \cite[Proposition 3.2]{kirch}, \cite[Theorem 6.2.7]{bo},
\cite[Theorem 6.1]{oz} and the discussion below, in \Cref{se.ft}).

This last reformulation of property (F) makes it possible to interpret the
latter as a ``softness'' property, ensuring that the discrete group is
approximable by small (linear, roughly speaking) quotients. Property
(F) is considered in the wider context of discrete quantum groups in
\cite{BW}, which provided another motivation for the present paper.

Finally, the conclusion of the above-cited theorem and the condition in Theorem 2 refer to residual finiteness. For a discrete group $\Gamma$ this simply means that every non-trivial element $\gamma\in \Gamma$ has non-trivial image in some finite quotient of $\Gamma$ (i.e. $\Gamma$ has ``enough'' finite quotients). Residual finiteness is widely studied to the extent that we cannot do the literature justice here.

Several generalizations of the notion of residual finiteness can be defined for discrete quantum groups. One of them is taking the fact that finitely generated linear groups are residually finite \cite{mal-lin} as a cue since our main interest in the quantum setting are noncommutative analogs of them; in the presence of finite generation residual finiteness for the discrete group $\Gamma$ can be recast as the requirement that the group $*$-algebra $\bC\Gamma$ have enough $*$-representations on finite-dimensional Hilbert spaces.  In this paper, we make the analogue of this requirement as the defining property for residual finiteness in the quantum case (see \Cref{se.rf} as well as \cite{chi-rf} for a precursor to this).

The interpretation of property (F) given above, in terms of finite-dimensional almost representations, makes Kirchberg's theorem very intuitive: in the presence of the rigidity property (T) the almost representations become honest representations of $\Gamma$ on finite-dimensional Hilbert spaces. The proof of \cite[Theorem 1.1]{kirch} captures this intuition, as does the proof of the analogous quantum statement in \Cref{th.tf} below.

The paper is organized as follows.  \Cref{se.prel} is devoted to recalling some of the necessary background for the sequel, including more precise formulations for the properties referred to above.  In \Cref{se.rf} we argue that residual finiteness implies property (F) for discrete quantum groups, as expected. \Cref{th.tf} of \Cref{se.ft} is the main result of this note, proving that the analogue of Kirchberg's theorem holds in the quantum setting. Apart from whatever intrinsic interest this might hold, it is perhaps a good indication that the notion of residual finiteness adopted here for (finitely generated) discrete quantum groups is the ``right one'', and well suited for further exploration. Finally, in \Cref{se.ap} we investigate the behavior of the various properties studied here under extensions of discrete quantum groups (see \Cref{def.ext} for the notion of extension) and give some applications of Theorems 1 and 2 for this setting.  As is the case classically, property (T) is preserved under extensions. We also prove a partial positive result in the same spirit for residual finiteness of discrete quantum groups in \Cref{th.bicr-rf} and for property (F) in \Cref{pr.fext}.

\subsection*{Acknowledgements}

M.B. is grateful for funding through NSF grant DMS-1700267. A.C. was partially supported by NSF grants DMS-1565226 and DMS-1801011.

\section{Preliminaries}\label{se.prel}

For the basics on compact and discrete quantum groups and their duality, we refer the reader to the standard references \cite{Ti08, Wor98, PoWo90, EfRu, dae-discr}.

For a discrete quantum group $\Gamma$ with compact quantum dual $G=\widehat{\Gamma}$ we typically denote its group algebra $\bC \Gamma$ by $\cA=\cA(G)$; this is the CQG algebra of representative functions on $G$. $\ir(G)$ denotes the set of irreducible (and hence finite-dimensional) representations of $G$; these are in one-to-one correspondence with the simple comodules of $\cA(G)$.

Throughout the paper we use the terms `representation of $G$', `corepresentation of $\cA(G)$' and `comodule of $\cA(G)$' interchangeably. 

For each $x\in \ir(G)$ representing an $n$-dimensional irreducible unitary representation $V$ of $G$ we have a matrix $u^x=(u^x_{ij})_{1\le i,j\le n}$, unitary in $M_n(\cA)$, consisting of matrix counits spanning the smallest subcoalgebra $C\subset \cA$ for which $V\to V\otimes \cA$ factors through $V\otimes C$. We sometimes denote
\begin{equation*}
  C^x = \mathrm{span}\{u^x_{ij}\},\ u^x = (u^x_{ij})\in M_n(\cA), 
\end{equation*}
and the underlying Hilbert space of the irreducible representation $x$ by $H_x$.

In \Cref{se.ft} below we will make use of property (T) for the discrete quantum group $\Gamma=\widehat{G}$. For background on this (some of which we recall below) we will be referring mainly to \cite{fim} (especially Section 3 therein). As for the classical property (T) for discrete groups, the reader can consult \cite{t} for a rather comprehensive account.

\subsection{Property (T)}

Let us recall (see \cite[Definition 3.1]{fim}) the following definition.

\begin{definition}\label{def.t}
  Let $G$ be a compact quantum group with underlying CQG algebra $\cA$, $X\subset \ir(G)$ a subset, and $\pi:\cA\to B(H)$ a $*$-representation on a Hilbert space $H$. For $x \in \ir(G)$, put $U^x = (\id \otimes \pi )u^x \in B(H_x) \otimes B(H)$.

\begin{enumerate}
\item
For $\varepsilon>0$ we say that the unit vector $v\in H$ is {\it
  $(X,\varepsilon)$-invariant} if for all $x\in X$ and all non-zero
$\eta\in H_x$ we have
\begin{equation*}
  \|U^x(\eta\otimes v)-(\eta\otimes v)\| < \varepsilon \|\eta\|.
\end{equation*}
\item We say the representation $\pi$ {\it almost contains invariant vectors}
if there are $(X,\varepsilon)$-invariant vectors for all finite
subsets $X\subseteq \ir(G)$ and all $\varepsilon>0$. In that case we
write ${\bf 1}\preceq \pi$.
\item We say $\Gamma=\widehat{G}$ {\it has property (T)} if whenever
${\bf 1}\preceq \pi$ the representation $\pi$ contains the
trivial representation as a summand (i.e. ${\bf 1}\le \pi$): there
exists a unit vector $v\in H$ such that
\begin{equation*}
  U^x(\eta\otimes v)=\eta\otimes v
\end{equation*}
for all $x\in \ir(G)$ and all $\eta\in H_x$.
\end{enumerate}
\end{definition}

\begin{remark}\label{re.1phi}
  The property ${\bf 1}\preceq \pi$ is sometimes also expressed by
  saying that $\pi$ {\it weakly contains} the trivial
  representation ${\bf 1}$ of $\Gamma$.   It is easy to see that the condition ${\bf 1}\preceq \pi$
  defined as above is equivalent to the existence of a net $(v_n)_n$
  of unit vectors in $H$ such that for every $x\in \ir(G)$ and every
  unit vector $\eta\in H_x$ we have
  \begin{equation*}
    U^x(\eta\otimes v_n)-(\eta\otimes v_n)\to 0 \qquad \text{in norm}. 
  \end{equation*}
\end{remark}

\begin{remark}\label{re.cont-const}
  We will need an ostensibly stronger but in fact equivalent formulation of property (T).

  First, \cite[Proposition 3.4]{fim} shows that a single $X$ and $\varepsilon$ suffice: if the group has property (T) as in \Cref{def.t} then there exist a finite set $X$ and $\varepsilon>0$ such that every representation with $(X,\varepsilon)$-invariant vectors contains invariant vectors.

  Secondly, we can then improve on this further so as to ensure invariant vectors are close to almost invariant ones: given a $\delta>0$ we can choose $X$ and $\varepsilon$ such that every $(X,\varepsilon)$-invariant unit vector is $\delta$-close to a unit invariant vector.

  This latter version is sometimes referred to as `property (T) with {\it continuity constants}' and appears explicitly in \cite[Proposition 1.16]{hv-t}. That proof (or that of \cite[Lemma 2.1]{kirch}, applied to the unitaries $U^x$) can be adapted to the quantum setting to yield the equivalence of the two definitions of property (T) (with and without continuity constants). See also \cite{pp} for an extended discussion of the matter in the context of von Neumann algebras.  
\end{remark}

Given that for a compact quantum group $G$ we regard $\cA(G)$ as the group algebra $\bC \Gamma$ of the discrete quantum dual $\Gamma=\widehat{G}$, the following concept is natural.

\begin{definition}\label{def.fg}
  Let $\Gamma=\widehat{G}$ be a discrete quantum group. A subset $X \subset \Gamma$
  {\it generates} $\Gamma$ if the matrix counits $u^x_{ij}$ generate
  $\cA(G)$ as a $\ast$-algebra.  We say that  $\Gamma$ is {\it finitely generated} if some finite subset   $X\subseteq \ir(G)$ generates $\Gamma$.
\end{definition}

On occasion, we refer to finitely generated CQG algebras as {\it CMQG
  algebras}.

\begin{remark}
  \Cref{def.fg} is equivalent to the notion of finite generation in
  \cite[$\S$2.3]{fim} and \cite[Theorem 2.5]{Wan95_1}.
\end{remark}

The relevance of \Cref{def.fg} to the present paper is that the finite
generation property is implied by property (T) (see the quantum
analogue \cite[Proposition 3.3]{fim} of the classical result to the same
effect, e.g. \cite[Theorem 1.3.1]{t}):

\begin{proposition}\label{pr.tfg}
  A discrete quantum group with property (T) is finitely
  generated. \qedhere
\end{proposition}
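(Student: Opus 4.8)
My plan is to mimic the classical argument that property (T) implies finite generation (as in [t, Theorem 1.3.1]), working with the GNS representation of the left regular representation — or, more conveniently, with a direct sum of finite pieces. The key point is that property (T) gives a uniform spectral gap: there is a finite $X_0 \subseteq \ir(G)$ and an $\varepsilon_0 > 0$ such that any representation $\pi$ possessing an $(X_0,\varepsilon_0)$-invariant unit vector actually contains the trivial representation. (This ``local'' form of (T) is standard; if the excerpt does not permit it as a black box, one extracts it by contradiction from \Cref{def.t}, building from a hypothetical failure a representation that almost contains invariant vectors but does not contain them.) Then I would argue that the subalgebra $\cB \subseteq \cA = \cA(G)$ generated as a $\ast$-algebra by $\{u^x_{ij} : x \in X_0\}$ is in fact all of $\cA$.

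First I would set up the relevant representation. Consider the family of all $\ast$-representations $\pi : \cA \to B(H)$ that restrict trivially on $\cB$, in the sense that $\pi|_\cB$ factors through the counit $\varepsilon : \cB \to \bC$; equivalently, the cyclic vector is $(X_0, 0)$-invariant. Form the universal such representation $\pi_{\cB}$ (a direct sum over a suitable set of cyclic ones). By construction $\pi_{\cB}$ has an $(X_0,0)$-invariant — hence $(X_0,\varepsilon_0)$-invariant — unit vector, so by the local form of property (T) it contains the trivial representation $\1$ of $\Gamma$ as a summand: there is a unit vector $v$ with $U^x(\eta \otimes v) = \eta \otimes v$ for every $x \in \ir(G)$ and $\eta \in H_x$. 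The heart of the matter is to conclude from this that every $u^x_{ij}$ — not just those with $x \in X_0$ — acts on this invariant vector exactly as $\varepsilon(u^x_{ij})$ does, i.e. as a scalar; iterating over the coalgebra structure (and using that the $U^x$ are the fundamental unitaries satisfying $U^{x} U^{y}$-type fusion relations, so invariance propagates to tensor products and hence to all of $\ir(G)$) forces $\pi_{\cB}$ to factor through the counit on all of $\cA$. Since $\pi_{\cB}$ is universal among representations trivial on $\cB$, this means that every representation of $\cA$ trivial on $\cB$ is trivial on $\cA$; a standard separation argument (the CQG algebra $\cA$ has a separating family of $\ast$-representations, e.g. its Haar-state GNS representation together with its finite-dimensional corepresentations) then yields $\cB = \cA$, i.e. $X_0$ generates $\Gamma$.

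I expect the main obstacle to be the bookkeeping in the propagation step: showing that an $(X_0, 0)$-invariant vector that is genuinely $\ir(G)$-invariant forces the representation to be $\varepsilon$ on the generated subalgebra, and then pushing ``triviality on $\cB$'' up to ``triviality on $\cA$''. Concretely one must check that $U^x(\eta\otimes v) = \eta\otimes v$ for all $x$ translates, via $\pi(u^x_{ij})v = \varepsilon(u^x_{ij})v = \delta_{ij}v$, into the statement that $v$ is fixed by the whole CQG action, and that conversely the existence of such a universally-fixed vector in the ``trivial-on-$\cB$'' representation is incompatible with $\cB \subsetneq \cA$ unless the fixed vector already generates a copy of the counit representation of the whole of $\cA$. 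A clean way to organize this is to note that $\cB$ is automatically a Hopf $\ast$-subalgebra of $\cA$ (being generated by the matrix coefficients of a corepresentation, it is closed under comultiplication, counit and antipode), so $\cB = \cA(H)$ for a quotient compact quantum group $G \twoheadrightarrow H$; the representation $\pi_{\cB}$ above is then exactly the regular-type representation of $\widehat{H}\hookrightarrow\widehat{G}$, property (T) of $\Gamma=\widehat{G}$ restricts to $\widehat{H}$, and the invariant vector witnesses that the trivial representation is isolated in $\widehat{H}$ in a way that forces $\widehat{H}$ to be all of $\widehat{G}$ — i.e. $H = G$. I would most likely write the final version in this quotient-quantum-group language, since it makes the ``propagation'' bookkeeping essentially automatic and reduces the whole argument to: $\cB$ is a Hopf subalgebra, the inclusion $\widehat H \hookrightarrow \widehat G$ is proper only if the corresponding representation lacks invariant vectors, contradicting (T).
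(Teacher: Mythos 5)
Your opening move --- extracting a ``Kazhdan pair'' $(X_0,\varepsilon_0)$ from \Cref{def.t} by a direct-sum-and-contradiction argument --- is fine, and your target ($\cB=\cA$ for the Hopf $*$-subalgebra $\cB$ generated by the coefficients of $X_0$) is the right one. (For context: the paper itself offers no proof of \Cref{pr.tfg}; it is quoted from \cite[Proposition 3.3]{fim}, the quantum analogue of \cite[Theorem 1.3.1]{t}.) The argument breaks down at the construction of $\pi_{\cB}$. First, ``$\pi|_{\cB}$ factors through the counit'' is \emph{not} equivalent to ``the cyclic vector is $(X_0,0)$-invariant'': classically the quasi-regular representation $\ell^2(\Gamma/\Lambda)$ has a $\Lambda$-fixed cyclic vector without being trivial on $\Lambda$. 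Second, on either reading the appeal to property (T) is vacuous, because the counit of $\cA$ itself belongs to your family (it is trivial on $\cB$ and its cyclic vector is $\cB$-fixed), so the universal object $\pi_{\cB}$ contains the trivial representation automatically; and conversely the single invariant vector that (T) produces never forces all of $\pi_{\cB}$ to factor through $\varepsilon$ --- a representation with one fixed vector need not be trivial. Third, even the statement you are driving at, ``every representation trivial on $\cB$ is trivial on $\cA$'', only says that the two-sided ideal of $\cA$ generated by $\ker(\varepsilon|_{\cB})$ has codimension one; classically this says the \emph{normal closure} of $\langle X_0\rangle$ is all of $\Gamma$, which implies neither $\langle X_0\rangle=\Gamma$ nor finite generation (a nontrivial cyclic subgroup of an infinite simple Kazhdan group already has full normal closure), and for the same reason the separation argument cannot work: the separating representations of $\cA$ are not among those trivial on $\cB$. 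Finally, in your closing paragraph the assertion that property (T) of $\widehat{G}$ ``restricts to $\widehat{H}$'' is unjustified --- (T) does not pass to (quantum) subgroups.

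What is missing is exactly the tool that drives both the classical proof and the cited quantum one: the quasi-regular representation. For a finite $X\subseteq\ir(G)$ let $\cA_X\subseteq\cA$ be the Hopf $*$-subalgebra spanned by the coefficients of the irreducibles generated by $X$ (your observation that this is a CQG subalgebra is correct), and let $E_X:\cA\to\cA_X$ be the Haar-state-preserving conditional expectation. The representation to use is the quantum analogue of $\ell^2(\Gamma/\langle X\rangle)$ built from $E_X$; its cyclic vector is exactly $X$-invariant, so the direct sum over all finite $X\subseteq\ir(G)$ almost contains invariant vectors, and property (T) yields an invariant vector in a single summand. The genuinely hard step --- absent from your proposal --- is the analysis of invariant vectors in this quasi-regular representation: classically an invariant vector forces $[\Gamma:\langle X\rangle]<\infty$, hence finite generation of $\Gamma$; the quantum counterpart of this analysis is the content of \cite[Proposition 3.3]{fim}, which the paper cites in lieu of a proof. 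It is this, and not the universal representation trivial on $\cB$, that completes the argument.
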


\begin{remark}
Any discrete quantum group $\Gamma$ with property (T) is also known to be of {\it Kac type} (or {\it unimodular}) \cite{fim}.  The latter means that the left and right Haar weights on $\Gamma$ are equal and tracial. It has two other equivalent forms, the compact dual quantum group $G$ has tracial Haar state and the antipode on 
$\cA(G)$ is bounded for the universal $C^*$-norm.
\end{remark}

\subsection{Property (F)}

We now briefly review the Kirchberg factorization property for discrete quantum groups, which was introduced and studied  in \cite{BW}.

Let $A$ be a unital C$^\ast$-algebra and $\tau:A \to \bC$ a tracial state with GNS triple $(\pi_\tau, H_\tau, \Lambda_\tau(1))$, where $\Lambda_\tau:A \to H_\tau$ is the canonical morphism from $A$ to the GNS Hilbert space $H_\tau$. Denote by $\pi_\tau^{op}$ the representation of the opposite C$^\ast$-algebra $A^{op}$ of $A$ on $H_\tau$ defined by 
\[
\pi_\tau^{op}(a^{op})\Lambda_\tau(b) = \Lambda_\tau(ba) \qquad (a,b \in A).
\]
Since $\pi_\tau$ and $\pi_\tau^{op}$ are commuting representations of $A$ and $A^{op}$, respectively, we obtain a representation of the {\it maximal} C$^\ast$-algebra tensor product 
\[(\pi_\tau \cdot \pi_\tau^{op})_{\max}:A \otimes_{\max} A^{op} \to B(H_\tau); \qquad  a \otimes b^{op} \mapsto \pi_\tau(a)\pi_\tau^{op}(b^{op}) \qquad (a,b \in A). \]

In the following, the normalized trace on the $k \times k$ matrix algebra $M_k = M_k(\bC)$ is denoted by $\text{tr}_k$.

\begin{theorem} \label{thm-f}
	{\rm (See \cite[Proposition 3.2]{kirch}, \cite[Theorem 6.1]{oz} and \cite[Theorem 6.2.7]{bo})}\label{amenable-trace-fp}   \label{kfp-equivalences}
For a trace $\tau$ on a C$^\ast$-algebra $A \subseteq B(H)$,  the following are equivalent.
\begin{enumerate} 
\item $\tau$ extends to an $A$-central state $\psi \in B(H)^*$.  I.e., $\psi(uxu^*) = \psi(x)$ for each $x \in B(H)$ and each unitary $u \in A$.
\item There is a net of unital and completely positive (abbreviated UCP) maps $\varphi_k:A\to M_{n_k} $ such that $\tr_{n_k}\circ \varphi_k (a)\to \tau(a)$ and 
$  \|\varphi_k(a^*b)-\varphi_k(a)^*\varphi_k(b)\|_{2,n_k}\to 0
$ for each $a, b \in A$, where $\|x\|_{2,n}=\tr_n(x^*x)^{\frac 12}$ for $x\in M_n$. 
\item The representation $(\pi_\tau \cdot \pi_\tau^{op})_{\max}:A \otimes_{\max} A^{op} \to B(H_\tau)$ factors through the quotient \\
$A \otimes_{\max} A^{op} \to A \otimes_{\min} A^{op}$.
\end{enumerate}
\end{theorem}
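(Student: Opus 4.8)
The plan is to prove that each of (1)--(3) is equivalent to injectivity --- equivalently semidiscreteness, equivalently hyperfiniteness --- of the finite von Neumann algebra $M:=\pi_\tau(A)''\subseteq B(H_\tau)$, which carries a faithful normal trace $\overline\tau$ with $\overline\tau(\pi_\tau(a))=\tau(a)$. The one substantial ingredient is Connes' theorem identifying these three properties for a finite von Neumann algebra; everything else is Arveson's extension theorem, weak$^*$-compactness of state spaces, Kaplansky density, the multiplicative domain of a UCP map, and bookkeeping. A convenient preliminary reduction: since $\tau$ is tracial, the standard form of $(A,\tau)$ gives $\pi_\tau^{op}(a^{op})=J_\tau\pi_\tau(a^*)J_\tau$, so the representation in (3) is $a\otimes b^{op}\mapsto\pi_\tau(a)J_\tau\pi_\tau(b^*)J_\tau$; condition (3) is then the $\min$-boundedness of this product map $A\odot A^{op}\to B(H_\tau)$, which is one of the standard equivalent forms of semidiscreteness of $M$ (Effros--Lance), the $\sigma$-weak density of $A$ in $M$ and the injectivity of the $\min$-tensor norm making the reduction to $M$ harmless.

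For (1): if $M$ is injective, I would extend $\pi_\tau\colon A\to B(H_\tau)$ to a UCP map $B(H)\to B(H_\tau)$ by Arveson and post-compose with a conditional expectation $B(H_\tau)\to M$ (which exists as $M$ is injective), obtaining a UCP map $\Phi\colon B(H)\to M$ with $\Phi|_A=\pi_\tau$; then $\overline\tau\circ\Phi$ is a state on $B(H)$ extending $\tau$, and it is $A$-central because $A$ lies in the multiplicative domain of $\Phi$ and $\overline\tau$ is a trace. Conversely, an $A$-central state $\psi$ on $B(H)$ satisfies $\psi(ay)=\psi(ya)$ for $a\in A$, $y\in B(H)$; transporting to $B(H_\tau)$ and upgrading this to an $M$-central hypertrace --- a short but slightly delicate step that uses Kaplansky density together with normality of $\overline\tau$ to see that the resulting state restricts to $\overline\tau$ on $M$ --- Connes' theorem then gives that $M$ is injective.

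Conditions (2) and (3) come out of the same circle. Injectivity, hence hyperfiniteness, of $M$ yields an increasing sequence of finite-dimensional subalgebras with $\sigma$-weakly dense union; composing $\pi_\tau$ with the trace-preserving conditional expectations onto these, then block-embedding each into a single matrix algebra with multiplicities chosen so the traces match approximately, produces the net in (2), and the asymptotic multiplicativity in $\|\cdot\|_{2,n_k}$ is immediate because those conditional expectations converge to the identity in $L^2(M,\overline\tau)$-norm on bounded sets; conversely the net in (2) is assembled, via Arveson and an ultrapower argument, into a UCP map $B(H)\to M$ fixing $A$, giving injectivity back. Condition (3) is semidiscreteness of $M$, as noted above. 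For the single implication (2)$\Rightarrow$(1) I would also note a direct route avoiding von Neumann algebra machinery: extend each $\varphi_k$ to $\widetilde\varphi_k\colon B(H)\to M_{n_k}$ by Arveson, let $\psi$ be a weak$^*$ cluster point of $\tr_{n_k}\circ\widetilde\varphi_k$ so that $\psi|_A=\tau$, and check $A$-centrality: the hypothesis with $a=b=u$ for a unitary $u\in A$ gives $\|1-\varphi_k(u)^*\varphi_k(u)\|_{2,n_k}\to0$, hence $\|\widetilde\varphi_k(uyu^*)-\varphi_k(u)\widetilde\varphi_k(y)\varphi_k(u)^*\|_{2,n_k}\to0$ uniformly on the unit ball by the Cauchy--Schwarz inequality for UCP maps, and $\tr_{n_k}$ is invariant under unitary conjugation and stable under such $2$-norm perturbations, so $\psi(uyu^*)=\psi(y)$.

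The main obstacle is the passage from the soft conditions (1) and (2) to injectivity/semidiscreteness of $M$ --- essentially the implications (1)$\Rightarrow$(3) and (2)$\Rightarrow$(3) --- where Connes' equivalence of injectivity, semidiscreteness and hyperfiniteness for finite von Neumann algebras is genuinely needed and constitutes the heart of the statement. The only other point calling for care, and a purely technical one, is the upgrade from $A$-centrality of a state on $B(H)$ to $M$-centrality, which rests on Kaplansky density and normality of the trace.
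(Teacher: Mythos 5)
Your overall strategy is based on a false claim, namely that each of (1)--(3) is equivalent to injectivity (semidiscreteness, hyperfiniteness) of $M=\pi_\tau(A)''$. This is not true, and it would make the theorem prove far too much. Concretely, take $A=C^*(\mathbb{F}_2)$ with the canonical trace $\tau$ coming from the left regular representation, so that $H_\tau=\ell^2(\mathbb{F}_2)$ and $\pi_\tau=\lambda$. Since $\mathbb{F}_2$ is residually finite it has Kirchberg's factorization property, i.e.\ condition (3) holds for this $\tau$ (this is precisely the situation the present paper is about), and hence (1) and (2) hold as well; yet $\pi_\tau(A)''=L(\mathbb{F}_2)$ is a non-injective II$_1$ factor. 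If your reduction were correct, amenability of a trace would collapse to hyperfiniteness of its GNS weak closure, and property (F) would force hyperlinearity to become amenability. The two steps where your argument actually breaks are exactly the ones you flag as "harmless'' or "slightly delicate'': (a) condition (3) is min-continuity of the product map on the C$^*$-subalgebra $A\odot A^{op}$, which is only $\sigma$-weakly dense in $M\odot M'$; semidiscreteness of $M$ in the Effros--Lance sense concerns the \emph{normal} product map on $M\odot M'$, and min-continuity does not pass from a weakly dense C$^*$-subalgebra to the von Neumann algebra, so (3) is strictly weaker than semidiscreteness of $M$; (b) the "upgrade'' of an $A$-central state to an $M$-central hypertrace via Kaplansky density fails because the state $\psi$ is not normal -- normality of $\bar\tau$ on $M$ is irrelevant, and one cannot pass to strong limits of unitaries inside a non-normal state. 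The $C^*(\mathbb{F}_2)$ example shows both steps are genuinely false, not merely incomplete.

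What survives of your proposal is the direct argument for (2)$\Rightarrow$(1): extending each $\varphi_k$ to $\widetilde\varphi_k:B(H)\to M_{n_k}$ by Arveson, taking a weak$^*$ cluster point of $\tr_{n_k}\circ\widetilde\varphi_k$, and checking $A$-centrality through the multiplicative-domain/Cauchy--Schwarz estimate is correct and is essentially the standard proof of that implication. For the remaining implications the sources cited in the statement (Kirchberg, Ozawa, Brown--Ozawa Theorem 6.2.7) do not pass through Connes' theorem at all: (3)$\Rightarrow$(1) is done by viewing $A\otimes_{\min}A^{op}\subseteq B(H)\otimes_{\min}A^{op}$, extending the product representation by Arveson, and using the multiplicative domain together with the tracial vector $\Lambda_\tau(1)$ to produce the $A$-central extension of $\tau$; and (1)$\Rightarrow$(2) is the genuinely hard step, proved by approximating the $A$-central state weak$^*$ by normal states (density matrices), converting centrality into asymptotic Hilbert--Schmidt almost-commutation with unitaries of $A$ via Day's convexity trick and the Powers--St\o rmer inequality, and then compressing by finite-rank spectral projections of the density matrices to obtain the asymptotically multiplicative UCP maps into matrix algebras. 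You would need to replace your injectivity reduction by arguments of this kind.
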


Note that property 1 in the above does not 
depend on the choice of embedding  $A \subseteq B(H)$. 

\begin{definition}
Any tracial state $\tau:A \to \bC$ satisfying the hypotheses of the above theorem is called {\it amenable}.
\end{definition}

\begin{remark} Note that for $\tau: \cA \to \bC$ to be amenable, it suffices to check condition 2 on any norm-dense $\ast$-subalgebra $\cA \subseteq A$.  In particular, if $G$ is a compact quantum group and $C^u(G) = C^*(\cA(G))$ denotes the universal enveloping C$^*$-algebra of the CQG-algebra $\cA (G)$, we shall call a tracial state $\tau:\cA(G)\to \bC$ {\it amenable} if its unique extension to $C^u(G)$ is amenable. 
\end{remark}

\begin{definition}\label{def.f} 
  Let $\Gamma=\widehat{G}$ be a discrete quantum group of Kac type. We say that $\Gamma$ has
  {\it the Kirchberg factorization property} (or {\it is FP}, or {\it
    has property (F)}) if the Haar trace on $\cA(G)$ is amenable.
\end{definition}

\begin{remark}This notion is precisely as in \cite[Definition 2.10]{BW}, whose  authors are investigating extensions of this property to non-unimodular discrete quantum groups where the 
Tomita-Takesaki theory is essential.  
\end{remark}

\subsection{Residual finiteness}

The notion of residual finiteness for discrete groups has several possible generalizations to discrete quantum groups. 
In this paper, we will use the following definition. 

\begin{definition}\label{def.rf}
  A discrete quantum group $\Gamma=\widehat{G}$ is called {\it RFD} if its
  underlying CQG algebra $\cA=\cA(G)$ embeds as a $\ast$-algebra into a product of matrix algebras.  I.e., 
   if for any $0\ne a\in \cA$ there is some $*$-representation $\pi$ of $A$ on a finite dimensional Hilbert space such that $\pi(a)\ne 0$ 
   (we then also say that $\cA$ itself is RFD).   If in addition $\Gamma$ is finitely generated in the sense of
  \Cref{def.fg}, then we say that it is {\it residually finite} (or
  {\it RF} for short).
\end{definition}

We note that any RFD discrete quantum group $\Gamma$ is automatically of Kac type.  See for example \cite[Remark A.2]{So05}.

\begin{remark} 
The above definition specializes to the classical notion of residual finiteness when the 
discrete quantum group in question is a finitely generated discrete group (since finitely generated maximally almost periodic groups are well-known to be residually finite). Each of the following five (a priori stronger) conditions on a CQG algebra $\cA$ also restricts to the usual notion of residual finiteness for classical discrete groups. Therefore each 
would deserve a name reflecting ``residual finiteness'' for genuine quantum groups. 

\begin{enumerate}
	
\item 
The first condition is demanding more than in \Cref{def.rf}, reflecting the original notion of residual finiteness in group theory which requires there to be sufficiently many finite group quotients: 
there is a faithful family of Hopf $\ast$-algebra morphisms $\pi_n: \cA \to H_n$ from the CQG algebra $\cA$ onto
(not necessarily co-commutative) finite-dimensional Hopf $\ast$-algebras $H_n$.

\item 
In addition to the condition (1),    
for each finite family of irreducible corepresentations $u^{\alpha_1}, ..., u^{\alpha_k}$ of  $\cA$, there exists a $\pi_{n_0}$ (from among the $\pi_n$'s) 
such that the corepresentation $(\id \otimes \pi_{n_0}) (u^{\alpha_1}),$ $\ldots, (\id \otimes \pi_{n_0}) (u^{\alpha_k})$ of 
the Hopf algebras $H_{n_0}$ are irreducible. (This condition extends \cite[Lemma 3.7.9]{bo} for 
residually finite discrete groups.)

\item 
In addition to the condition (1), require $\pi_n$ there to be co-normal morphisms. 
(Recall that according to \cite[Theorem 2.7]{Wan13}, 
when $\cA = \cA (G)$ and $H_n = \cA (N)$, the surjection 
$\pi_n: \cA \to H_n$ being co-normal \cite{MontS} is equivalent to $N$ being a compact normal quantum subgroup.) 

\item 
In addition to  the conditions in (2), require $\pi_n$ to be co-normal morphisms.

\item 
There is a family of cofinite dimensional normal Hopf $\ast$-subalgebras $H_n$
of $\cA(G)$ whose intersection is the trivial Hopf algebra (i.e. the scalar field).
Here a normal Hopf $\ast$-subalgebra is called cofinite if the quotient 
$\cA(G)/\cA(G){H^+_n}$ is a finite dimensional Hopf algebra, where $H_n^+$ is the kernel of the counit of $H_n$.

\end{enumerate}

More work needs to be done to investigate further examples beyond discrete groups regarding the above properties as well as deeper results beyond these properties. 
For instance, it would be interesting to determine if any of 
the known quantum groups ($q$-deformed ones as well as the universal or free ones) satisfy any of the conditions above. 

\end{remark}

\section{Residual finiteness implies property (F)}\label{se.rf}

The main results of this section is that the RFD property for a not necessarily finitely generated discrete quantum group implies property (F):

\begin{theorem}\label{th.RFD_FP}
  An RFD discrete quantum group has property (F).
\end{theorem}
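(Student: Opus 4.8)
The plan is to verify condition (2) of \Cref{kfp-equivalences} for the Haar trace $\tau$ on $A = C^u(G)$, using the dense $\ast$-subalgebra $\cA = \cA(G)$ as permitted by the Remark following that theorem. By the RFD hypothesis we have a faithful family of finite-dimensional $\ast$-representations of $\cA$; equivalently, $\cA$ embeds into $\prod_i M_{d_i}(\bC)$. The first step is to record that, since $\cA$ is RFD, it is of Kac type (as noted after \Cref{def.rf}, citing \cite{So05}), so the Haar state $\tau$ is a genuine tracial state and it makes sense to ask whether it is amenable.

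The key idea is that the Haar state of a Kac-type CQG algebra is \emph{already} a (weak-$*$) limit of traces pulled back from matrix algebras via honest $\ast$-representations, and honest $\ast$-representations are in particular UCP maps with \emph{zero} multiplicativity defect. Concretely, I would proceed as follows. Fix a finite-dimensional subspace $E \subset \cA$ (it suffices, by linearity and a standard $\varepsilon/3$ argument, to handle the matrix coefficients $u^x_{ij}$ for $x$ in a finite subset $X \subset \ir(G)$). Since $\cA \hookrightarrow \prod_i M_{d_i}(\bC)$ is faithful and $\cA$ is a $\ast$-algebra spanned by matrix coefficients of a compact \emph{quantum} group, each simple summand $M_{d_i}(\bC)$ receives a finite-dimensional unitary corepresentation; the Peter--Weyl theory for CQG algebras (orthogonality of matrix coefficients with respect to the Haar state, together with the fact that $\tau$ restricted to each isotypic block is the normalized matrix trace) lets me express $\tau(a)$, for $a \in \cA$, as an average of normalized traces $\tr_{d_i}\circ \pi_i(a)$ over finitely many of the $\pi_i$ — or, more robustly, as a weak-$*$ limit of such averages. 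The finite-dimensional $\ast$-representations $\pi_i: \cA \to M_{d_i}(\bC)$ extend to $\ast$-representations of $C^u(G)$, are automatically UCP, and satisfy $\|\pi_i(a^*b) - \pi_i(a)^*\pi_i(b)\|_{2,d_i} = 0$ exactly. So taking $\varphi_k$ to be appropriate (finite convex combinations of, or a net drawn from) these $\pi_i$'s, the two conditions in part (2) of \Cref{kfp-equivalences} — asymptotic $2$-norm multiplicativity and weak-$*$ convergence of $\tr\circ\varphi_k$ to $\tau$ — are met, the former trivially.

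The one genuine point requiring care, and what I expect to be the main obstacle, is the convergence $\tr_{n_k}\circ\varphi_k \to \tau$ in the required sense while keeping the maps \emph{honest} representations (so that the defect vanishes). A clean way around any difficulty is to appeal to condition (1) of \Cref{kfp-equivalences} instead: realize $A = C^u(G)$ faithfully on $H = \bigoplus_i (\text{GNS or standard space of }\pi_i)$ via $\pi = \bigoplus_i \pi_i$, observe that each $\pi_i(\cA)$ sits inside the \emph{finite-dimensional} algebra $M_{d_i}(\bC)$ which carries its own $\pi_i(A)$-central tracial state, and patch these together into an $A$-central state $\psi$ on $B(H)$ extending $\tau$ — here one uses that $\tau$ decomposes compatibly with the block decomposition coming from Peter--Weyl, so the local central traces assemble to a global $A$-central functional restricting to $\tau$ on $\cA$ and hence on $A$ by density and normality. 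Either route reduces the theorem to the Peter--Weyl bookkeeping identifying the Haar trace with the (limit of) matrix traces along the given faithful finite-dimensional representations, which is where I would concentrate the written proof.
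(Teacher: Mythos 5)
There is a genuine gap, and it sits exactly where you said you would ``concentrate the written proof'': the claim that Peter--Weyl theory lets you write the Haar trace $\tau$ as a (weak-$*$ limit of averages of) the normalized traces $\tr_{d_i}\circ\pi_i$ along the given finite-dimensional representations. This is not what Peter--Weyl gives. The orthogonality relations say that the Haar state \emph{annihilates} every nontrivial coefficient block $C^x=\mathrm{span}\{u^x_{ij}\}$ (it is not ``the normalized matrix trace on each isotypic block''), whereas the states $\tr_{d_i}\circ\pi_i$ coming from an arbitrary faithful family of finite-dimensional $*$-representations have no reason to vanish there, and nothing in \Cref{def.rf} forces the Haar state to lie in the weak-$*$ closed convex hull of such traces. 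In the classical case this step is rescued by residual finiteness in the group-theoretic sense: one uses the \emph{regular} representations of finite quotients, whose normalized traces do converge pointwise to $\delta_e$. But the quantum RFD hypothesis supplies no finite quantum group quotients, only finite-dimensional $*$-representations, so that mechanism is unavailable; indeed, the statement you are assuming (Haar trace approximable by traces of honest finite-dimensional representations) is strictly stronger than amenability of the Haar trace and is not known to follow from RFD in this generality. Your fallback via condition (1) of \Cref{kfp-equivalences} has the same circularity: patching the block traces $\tr_{d_i}$ on $B(H_i)$ produces an $A$-central state whose restriction to $A$ is a cluster point of (convex combinations of) the $\tr_{d_i}\circ\pi_i$, not the Haar state, so the ``compatible decomposition of $\tau$'' you invoke is precisely the unproved point.

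What your argument does legitimately establish is that a state of the form $\tau'=\sum_k\alpha_k\,\tr_{n_k}\circ\pi_k$ (with $\alpha_k>0$, $\sum\alpha_k=1$) is an amenable trace on $\cA$, since it extends to the product of matrix algebras. The missing bridge from $\tau'$ to the Haar state is supplied in the paper by a different device: after reducing to the finitely generated (hence countable-dimensional) case via an inductive-limit argument for amenable traces (\Cref{pr.colim_gen}), one chooses $\tau'$ \emph{faithful} on $\cA$, invokes Woronowicz's argument \cite[Proposition 4.1]{Wor87} that the Ces\`aro means of the convolution powers $\tau'^{*n}$ converge pointwise to the Haar state, and then uses that amenable traces are closed under convolution, convex combinations and weak-$*$ limits \cite[Propositions 2.12, 2.13]{BW}, \cite[Proposition 6.3.5]{bo}. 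Without some such convolution/averaging step (or another argument replacing it), your proposal does not reach the Haar trace, so as written the proof does not go through.
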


Before going into the proof, we will make some preparations. First, we
reduce the problem to Pontryagin duals of compact {\it matrix} quantum
groups in the sense of \cite{Wor87} (where they are referred to as
`compact matrix pseudogroups'). Recall that these are compact quantum
groups $G$ whose discrete quantum duals are finitely generated in the
sense of \Cref{def.fg}. That is, the underlying CQG algebra $\cA(G)$
is finitely generated (as an algebra, or equivalently, as a
$*$-algebra).

For every compact quantum group $G$, we can write the corresponding
CQG algebra $\cA(G)$ as the union of its CMQG subalgebras $\cA(G_i)$
(for $i$ ranging over some index set). For this reason, the following
result is relevant to our specialization to compact matrix quantum
groups.

\begin{proposition}\label{pr.colim}
  Let $G$ be a compact quantum group, and suppose $\cA(G)$ can be
  written as the union $\varinjlim_i \cA(G_i)$ of CQG subalgebras for
  a family of quantum group quotients $G\to G_i$.   Then, $\widehat{G}$ has property (F) if and only if each $\widehat{G_i}$ has property (F).
\end{proposition}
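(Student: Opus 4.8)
The plan is to verify property (F) for $\widehat G$, respectively for each $\widehat{G_i}$, through condition~(2) of \Cref{kfp-equivalences}, which by the remark following it may be checked on the dense $*$-subalgebra $\cA(G)\subseteq C^u(G)$ (resp.\ $\cA(G_i)\subseteq C^u(G_i)$). Write $\tau$ and $\tau_i$ for the Haar traces of $\cA(G)$ and $\cA(G_i)$. Two standard facts about a quantum group quotient $G\to G_i$, i.e.\ about a Hopf $*$-subalgebra inclusion $\cA(G_i)\subseteq\cA(G)$, will be used: (a) the Haar state of $\cA(G)$ restricts to the Haar state of $\cA(G_i)$ (by uniqueness of the invariant functional), so $\tau|_{\cA(G_i)}=\tau_i$; since moreover any two elements of $\cA(G)=\varinjlim_i\cA(G_i)$ lie in a common $\cA(G_i)$, $\tau$ is tracial iff every $\tau_i$ is, so that property (F) is meaningful on one side of the asserted equivalence exactly when it is on the other; (b) the inclusion $\cA(G_i)\subseteq\cA(G)$ extends to an isometric inclusion $C^u(G_i)\hookrightarrow C^u(G)$ of universal $C^*$-algebras admitting a Haar-state-preserving UCP conditional expectation $E_i\colon C^u(G)\to C^u(G_i)$ (which is the identity on $\cA(G_i)$).

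The forward implication is the easy one. If $\widehat G$ has property (F), let $(\varphi_k\colon C^u(G)\to M_{n_k})_k$ be a net of UCP maps with $\tr_{n_k}\circ\varphi_k\to\tau$ pointwise and $\|\varphi_k(a^*b)-\varphi_k(a)^*\varphi_k(b)\|_{2,n_k}\to 0$ for all $a,b$. Restricting each $\varphi_k$ along $C^u(G_i)\hookrightarrow C^u(G)$ gives a net of UCP maps $C^u(G_i)\to M_{n_k}$, and since $\tau|_{\cA(G_i)}=\tau_i$ the same two asymptotic conditions persist for all $a,b\in\cA(G_i)$. Hence $\tau_i$ is amenable, i.e.\ $\widehat{G_i}$ has property (F).

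For the converse, suppose every $\widehat{G_i}$ has property (F). Index a net by the directed set of pairs $(F,\varepsilon)$ with $F\subseteq\cA(G)$ finite and $\varepsilon>0$, ordered by $(F,\varepsilon)\le(F',\varepsilon')$ iff $F\subseteq F'$ and $\varepsilon'\le\varepsilon$. Given $(F,\varepsilon)$: as $\cA(G)=\varinjlim_i\cA(G_i)$ is a directed union of subalgebras, $F\subseteq\cA(G_i)$ for some $i$; amenability of $\tau_i$ then provides a single UCP map $\varphi\colon C^u(G_i)\to M_n$ with $|\tr_n\varphi(a)-\tau_i(a)|<\varepsilon$ and $\|\varphi(a^*b)-\varphi(a)^*\varphi(b)\|_{2,n}<\varepsilon$ for all $a,b\in F$. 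Put $\psi_{(F,\varepsilon)}:=\varphi\circ E_i\colon C^u(G)\to M_n$, a UCP map restricting to $\varphi$ on $\cA(G_i)\supseteq F$. Then for $a,b\in F$ (so also $a^*b\in\cA(G_i)$) we have $|\tr_n\psi_{(F,\varepsilon)}(a)-\tau(a)|=|\tr_n\varphi(a)-\tau_i(a)|<\varepsilon$ and $\|\psi_{(F,\varepsilon)}(a^*b)-\psi_{(F,\varepsilon)}(a)^*\psi_{(F,\varepsilon)}(b)\|_{2,n}=\|\varphi(a^*b)-\varphi(a)^*\varphi(b)\|_{2,n}<\varepsilon$. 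As $(F,\varepsilon)$ runs through the net, both quantities tend to $0$ for every fixed $a,b\in\cA(G)$; hence $\tau$ is amenable and $\widehat G$ has property (F).

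The substantive point is the extension step — turning a UCP map defined on $\cA(G_i)$ into one on $\cA(G)$ that still witnesses amenability on the finite set $F$ — for which I invoke the conditional expectation $E_i$ of (b) above (equivalently, Arveson's extension theorem for the isometric inclusion $C^u(G_i)\hookrightarrow C^u(G)$). I expect this to be the only place where genuine compact-quantum-group input beyond \Cref{kfp-equivalences} enters: $E_i$ can be realized concretely as the average $(\id\otimes h_N)(\id\otimes\pi_N)\Delta$ over the normal kernel $N$ of $G\to G_i$, where $\pi_N\colon C^u(G)\to C^u(N)$ is the corresponding quotient map and $h_N$ its Haar state, and one checks that this map fixes $\cA(G_i)$ pointwise and takes values in $C^u(G_i)$. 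The remainder is routine bookkeeping with nets.
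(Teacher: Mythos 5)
Your forward implication is the same as the paper's (restrict the witnessing UCP maps), and your reduction of the converse to extending, for each finite $F\subseteq\cA(G_i)$, a UCP map $C^u(G_i)\to M_n$ to a UCP map on $C^u(G)$ is fine as bookkeeping. The gap is in the justification of that extension step, which you yourself single out as the only substantive point. The conditional expectation you propose does not exist in general: a quantum group quotient $G\to G_i$ means nothing more than a Hopf $*$-subalgebra inclusion $\cA(G_i)\subseteq\cA(G)$, i.e.\ an arbitrary discrete quantum subgroup $\widehat{G_i}\le\widehat{G}$, and there is no ``normal kernel $N$'' unless that subgroup happens to be normal. Already for $\cA(G)=\bC\Gamma$ and $\cA(G_i)=\bC\Lambda$ with $\Lambda$ a non-normal subgroup of a discrete group $\Gamma$ --- which is precisely the shape of the paper's intended application, where the $\cA(G_i)$ range over \emph{all} finitely generated Hopf $*$-subalgebras --- there is no Hopf $*$-algebra quotient $\pi_N$ of $\bC\Gamma$ whose invariants under $(\id\otimes\pi_N)\Delta$ are $\bC\Lambda$: averaging over a quantum subgroup yields a coideal (``functions constant on cosets''), which is a Hopf subalgebra, let alone equal to $\cA(G_i)$, only in the normal case. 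Your fallback via Arveson requires the map $C^u(G_i)\to C^u(G)$ to be isometric; this (and indeed a universal-level expectation) does hold for discrete quantum subgroups, but it is a genuinely nontrivial theorem --- one must induce representations at the universal, not merely the reduced, level, as in Vergnioux's treatment of discrete quantum subgroups --- so asserting it in passing leaves a real hole exactly where your argument needs to work.

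The paper avoids this issue altogether: \Cref{pr.colim_gen} shows, for any filtered inductive limit $A=\varinjlim_i A_i$ of $C^*$-algebras and any trace $\tau$ on $A$, that $\tau$ is amenable iff every $\tau|_{A_i}$ is, by working with characterization (1) of \Cref{kfp-equivalences} rather than (2): amenability of each $\tau_i$ produces an $A_i$-central state on $B(H)$ through the composition $A_i\to A\subseteq B(H)$ (which need not be injective --- its kernel automatically lies in the left kernel of $\tau_i$), and a weak-$*$ cluster point of these states is an $A$-central extension of $\tau$. No embedding of universal $C^*$-algebras and no conditional expectation enters. To repair your version, either quote the universal-level embedding/expectation results for discrete quantum subgroups in place of the normal-kernel averaging, or switch to the central-state characterization as the paper does.
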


This will be an immediate consequence of the following more general result.

\begin{proposition}\label{pr.colim_gen}
  Let $A$ be a $C^*$-algebra expressible as a filtered inductive limit $\varinjlim_i A_i$ of $C^*$-algebras. Let also $\tau$ be a trace on $A$.  Then, $\tau$ is amenable if and only if its restrictions $\tau_i=\tau|_{A_i}$ are all amenable. 
\end{proposition}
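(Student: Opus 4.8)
The plan is to use characterization (2) of amenability from \Cref{kfp-equivalences}, which has the advantage of being an approximation condition that interacts well with inductive limits. The ``only if'' direction is essentially formal: if $\tau$ is amenable, witnessed by a net $\varphi_k:A\to M_{n_k}$ as in condition (2), then restricting each $\varphi_k$ to a subalgebra $A_i$ still yields UCP maps $A_i\to M_{n_k}$ satisfying $\tr_{n_k}\circ\varphi_k|_{A_i}(a)\to\tau_i(a)$ and the asymptotic multiplicativity estimate, since these conditions are checked pointwise on elements of $A_i\subseteq A$. Hence each $\tau_i$ is amenable.

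For the ``if'' direction, suppose every $\tau_i$ is amenable. I would verify condition (2) for $\tau$ directly. Fix a finite set $F\subseteq A$ and $\varepsilon>0$; I want a single UCP map $\varphi:A\to M_n$ with $|\tr_n\circ\varphi(a)-\tau(a)|<\varepsilon$ and $\|\varphi(a^*b)-\varphi(a)^*\varphi(b)\|_{2,n}<\varepsilon$ for all $a,b\in F$. Because $A=\varinjlim_i A_i$ is a filtered limit, $\bigcup_i A_i$ is norm-dense in $A$, so after perturbing I may assume $F$ lies within norm $\varepsilon$ of a finite subset $F'$ of some $A_{i_0}$; a standard continuity/perturbation argument (UCP maps are contractive, and both the trace condition and the $\|\cdot\|_{2,n}$-almost-multiplicativity condition are uniformly continuous in the relevant entries) reduces the problem to approximating on $F'\subseteq A_{i_0}$. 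Now invoke amenability of $\tau_{i_0}$ to get a UCP map $\varphi':A_{i_0}\to M_n$ doing the job on $F'$, and extend $\varphi'$ to a UCP map $\varphi:A\to M_n$ on all of $A$ using Arveson's extension theorem (finite-dimensional, hence injective, codomain). Since $\varphi$ extends $\varphi'$, it satisfies the required estimates on $F'$, hence on $F$ up to a controlled error. Letting $F$ exhaust $A$ and $\varepsilon\to0$ produces the desired net, so $\tau$ is amenable.

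The main subtlety — not really an obstacle, but the point requiring care — is the perturbation bookkeeping: one must ensure that replacing $F$ by a nearby finite set in $\bigcup_i A_i$, and then extending the approximating map via Arveson, only degrades the two defining estimates by $O(\varepsilon)$. For the trace estimate this is immediate from $|\tr_n\circ\varphi(a-a')|\le\|a-a'\|$. For the almost-multiplicativity estimate one uses $\|x\|_{2,n}\le\|x\|$ together with boundedness of $\varphi$ on products, writing $\varphi(a^*b)-\varphi(a)^*\varphi(b)$ and inserting $a',b'$ to split the difference into the $A_{i_0}$-term (small by choice of $\varphi'$) plus error terms each bounded by a fixed multiple of $\|a-a'\|+\|b-b'\|$ and the (bounded) norms of the elements involved. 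Finally, \Cref{pr.colim} follows at once: writing $\cA(G)=C^u(G)$-completions appropriately, $C^u(G)$ is the inductive limit of the $C^u(G_i)$, the Haar trace on $\cA(G)$ restricts to the Haar trace on each $\cA(G_i)$, and \Cref{pr.colim_gen} applied to this inductive system gives the equivalence, which by \Cref{def.f} is exactly the statement that $\widehat G$ has property (F) iff each $\widehat{G_i}$ does.
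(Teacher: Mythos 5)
Your argument is correct, but it is not the route the paper takes. You work with characterization (2) of \Cref{kfp-equivalences}: approximate a finite set $F\subseteq A$ by a finite set in some $A_{i_0}$ (density of $\bigcup_i A_i$), pull an almost-multiplicative, trace-approximating UCP map out of the amenability of $\tau_{i_0}$, extend it to $A$ by Arveson (the codomain $M_n$ being injective), and control the two estimates by perturbation bounds such as $\|x\|_{2,n}\le\|x\|$; assembling these over the directed set of pairs $(F,\varepsilon)$ gives the required net, and your bookkeeping for the error terms is sound. The paper instead uses characterization (1): each amenable $\tau_i$ yields an $A_i$-central state $\psi_i$ on $B(H)$ for a fixed representation $A\to B(H)$, and a weak$^*$-limit point of the net $(\psi_i)_i$ along the filtered index set is an $A$-central extension of $\tau$ --- a softer compactness argument with no $\varepsilon$-management. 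Besides brevity, the paper's route has one concrete advantage that you should note: as the remark following the proposition stresses, the structure maps $A_i\to A$ are not assumed injective, and the paper's proof accommodates this via the observation that amenability can be tested in any representation whose kernel is contained in that of the trace. Your Arveson extension step, by contrast, needs the approximating UCP map to be defined on a subalgebra of $A$; when $A_{i_0}\to A$ is not injective you must first replace $A_{i_0}$ by its image $B_{i_0}\subseteq A$ and argue that the induced trace on $B_{i_0}$ is still amenable (which again comes down to the fact that amenability depends only on the GNS representation, the trace-kernel ideal being killed there). So your proof is complete as written for the subalgebra reading of the statement, and repairable in general, but the paper's central-state argument absorbs that subtlety for free.
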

\begin{proof}
  One direction is immediate: the amenability for the $\tau_i$ follows from the fact that a net of UCP maps $\varphi_k: A\to M_{n_k}$ witnessing amenability for $\tau$ restrict to UCP maps $A_i\to M_{n_k}$ witnessing the amenability of each $\tau_i$.

Conversely, suppose all $\tau_i$ are amenable. We will prove that $\tau$ is amenable by means of characterization (1) in \Cref{kfp-equivalences}: for an embedding $A\subseteq B(H)$, $\tau$ extends to an $A$-central state on $B(H)$.  Note that since the amenability of a trace, a priori, is defined only in terms of the GNS representation of the trace, the cited characterization goes through so long as $A\to B(H)$ is a representation whose kernel is contained in that of the trace. 

In conclusion, the amenability of the $\tau_i$ implies the existence of $A_i$-central states $\psi_i$ on $B(H)$, where the $A_i$ map into the latter via the compositions
\begin{equation*}
  A_i\to A\to B(H). 
\end{equation*}
Now let $\psi$ be a the state on $B(H)$ obtained as the limit of some w$^\ast$-convergent sub-net of $(\psi_i)_i$. It follows immediately from its construction that $\psi$ is an $A$-central extension of $\tau$ to $B(H)$, hence the conclusion.  
\end{proof}

\begin{remark}
  Note that neither the structure maps $A_i\to A=\varinjlim_i A_i$ of the  inductive limit nor the connecting maps $A_i\to A_j$ are assumed to be one-to-one.  
\end{remark}

\begin{proof_of_colim}
Simply apply \Cref{pr.colim_gen} to the universal $C^*$-algebra $C^u(G) = C^*(\cA(G))$ associated to $G$, expressed as the filtered  inductive limit of the universal $C^*$-algebras $C_i$ associated to the compact matrix quantum quotients $G\to G_i$. The trace $\tau$ in question here is the Haar state of $C^u(G)$, which indeed, as \Cref{pr.colim_gen} requires, restricts to the Haar states of $\tau_i:C^u(G_i) \to \bC$. 
\end{proof_of_colim}

In conclusion, we get

\begin{corollary}\label{cor.reduction}
  If the statement of \Cref{th.RFD_FP} holds for duals of compact matrix
  quantum groups, then it holds in general. 
\end{corollary}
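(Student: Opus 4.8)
The plan is to combine the hereditary behavior of the RFD property under passage to $*$-subalgebras with the colimit principle recorded in \Cref{pr.colim}. Suppose the statement of \Cref{th.RFD_FP} is already known for duals of compact matrix quantum groups, and let $\Gamma=\widehat{G}$ be an arbitrary RFD discrete quantum group; thus $\cA=\cA(G)$ embeds as a $*$-algebra into a product $\prod_\lambda M_{n_\lambda}(\bC)$ of matrix algebras.

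First I would write $\cA(G)$ as the union $\varinjlim_i\cA(G_i)$ of its CMQG (i.e. finitely generated CQG) subalgebras, indexed so that each inclusion $\cA(G_i)\subseteq\cA(G)$ is dual to a quantum group quotient $G\to G_i$. This is possible because every element of $\cA(G)$ lies in a subcoalgebra spanned by finitely many matrix counits $u^x_{ij}$, hence in a finitely generated CQG subalgebra, and the family of such subalgebras is directed. By construction each $\widehat{G_i}$ is a finitely generated discrete quantum group, that is, the dual of a compact matrix quantum group.

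Next I would observe that each $\cA(G_i)$ inherits the RFD property: restricting the fixed embedding $\cA(G)\hookrightarrow\prod_\lambda M_{n_\lambda}(\bC)$ to the $*$-subalgebra $\cA(G_i)$ exhibits the latter as a $*$-subalgebra of a product of matrix algebras, so it is again RFD (in particular Kac type, so that property (F) is meaningful for $\widehat{G_i}$). Hence each $\widehat{G_i}$ is an RFD dual of a compact matrix quantum group, and by the assumed special case of \Cref{th.RFD_FP} it has property (F). Finally, \Cref{pr.colim} applied to the presentation $\cA(G)=\varinjlim_i\cA(G_i)$ upgrades this to the assertion that $\widehat{G}=\Gamma$ has property (F), which is what we want.

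There is no serious obstacle here; the argument is a purely formal reduction. The only points needing (routine) attention are that the $\cA(G_i)$ occurring in the decomposition genuinely are CQG subalgebras arising from compact quantum group quotients $G\to G_i$, so that \Cref{pr.colim} applies verbatim, and that ``RFD'' is manifestly passed to $*$-subalgebras, which is immediate from the definition in terms of embeddings into products of matrix algebras.
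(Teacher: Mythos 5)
Your argument is correct and coincides with the paper's own proof: write $\cA(G)$ as the directed union of its CMQG subalgebras $\cA(G_i)$, note that RFD passes to $*$-subalgebras so each $\widehat{G_i}$ is RFD and hence (by the assumed special case) has property (F), and then invoke \Cref{pr.colim}. The extra detail you supply on why the CMQG subalgebras form a directed family is routine and does not change the argument.
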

\begin{proof}
Let $G$ be a compact quantum group with the property that $\cA(G)$ is
residually finite-dimensional. As noted above, $\cA(G)$ is the union of $\cA(G_i)$ as $G_i$ range
over the compact matrix quantum group quotients $G\to G_i$. Residual
finite-dimensionality is inherited by $*$-subalgebras, so we know that
all $\widehat{G_i}$ are RFD and hence, by assumption,
have property (F). \Cref{pr.colim} now finishes the proof. 
\end{proof}

We are now ready for the proof of the main result announced above.

\begin{proof_of_RFDFP}
According to \Cref{cor.reduction}, it suffices to assume that the
discrete quantum group in question is $\widehat{G}$, where $G$ is a
compact matrix quantum group. 

In this setup, the countable dimensionality of $\cA=\cA(G)$ as a complex
$*$-algebra, together with the RFD property, ensure that we have an
embedding 
\begin{equation}\label{eq:embedding}
  \cA \hookrightarrow M:=\prod_{k=1}^\infty M_{n_k}
\end{equation}
into a {\it countable} product of matrix algebras. Here, the right
hand side of \Cref{eq:embedding} signifies the product in the category
of $C^*$-algebras, i.e. the set of {\it bounded} sequences of elements
in $M_{n_k}$ as $k$ ranges over the positive integers. 

Now consider a sequence $\alpha_k>0$, $k\ge 1$ of positive reals
adding up to $1$, and let
\begin{equation*}
  \tau=\sum_{k=1}^\infty \alpha_k \tr_{n_k}
\end{equation*}
be the corresponding faithful state on $M$ (where $\tr_n$ denotes the
normalized trace on $M_n$). By \Cref{pr.colim_gen}, 
$\tau$ is an amenable trace on $M$. 

We regard $\cA$ as a $*$-subalgebra of $M$ via \Cref{eq:embedding},
and by a slight abuse of notation we regard $\tau$ as a state on
$\cA$. The proof of \cite[Proposition 4.1]{Wor87} shows that the
Ces\`aro limit of the convolution iterates $\tau^{*n}$ is precisely
the Haar state on $\cA$ (note that although Woronowicz requires
faithfulness of $\tau$ on a $C^*$ completion of $\cA$, the proof only
requires this on $\cA$).

The conclusion now follows from the observations that (a) $\tau$ is an
amenable trace on $\cA$ by \cite[Proposition 6.3.5.(a)]{bo} and (b) the collection of amenable traces is
weak$^*$-closed, and also closed under convolution and convex
combinations (\cite[Propositions 2.12, 2.13]{BW}). 
\end{proof_of_RFDFP}

\begin{remark}\label{re.hyp}
  Note that the factorization property implies {\it hyperlinearity} (or {\em Connes' embedding property}) 
  for the respective discrete quantum group in the sense of
  \cite[$\S$3.2]{bcv}: the weak-$*$ closure of $\cA$ in the GNS
  representation of the Haar state is embeddable into an ultrapower of
  the hyperfinite $II_1$ factor.  This follows, for instance, from \cite[Proposition 3.2]{kirch} (see
  also \cite[Exercise 6.2.4]{bo} and the discussion on
  \cite[p. 198]{thom-hyp}).
\end{remark}

\begin{remark}\label{re.freeqg}
	The third named author (A.C.) showed in \cite{chi-rf} that the free discrete quantum groups 
	$\widehat{U_n^+}$ and $\widehat{O_n^+}$ are RFD when $n\neq 3$. 
	Along with \Cref{th.RFD_FP} above, 
	this implies the main result in \cite{BW} of the first (A.B.) and last (S.W.) named authors 
	stating that $\widehat{U_n^+}$ and $\widehat{O_n^+}$ have factorization property under the same condition. 
	Therefore, as remarked in \Cref{re.hyp} above, these discrete quantum groups are hyperlinear (or have 
	Connes' embedding property), which is the main result of the second named 
	author and his collaborators in \cite{bcv}. 	
\end{remark}

\section{Properties (F) and (T) imply residual finiteness}\label{se.ft}

Recall the notion of residual finiteness for quantum groups introduced
in \Cref{def.rf} and property (T), as recalled above in \Cref{def.t}. The main result of this section is the following theorem.

\begin{theorem}\label{th.tf}
  A discrete quantum group with property (T) and the factorization property is residually finite.
\end{theorem}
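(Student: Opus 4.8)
The plan is to mimic Kirchberg's original argument (\cite[Theorem 1.1]{kirch}), translated into the quantum language. By \Cref{pr.tfg}, property (T) forces $\Gamma=\widehat G$ to be finitely generated, so by \Cref{def.rf} we only need to produce enough finite-dimensional $*$-representations of $\cA=\cA(G)$; equivalently, the Haar trace $h$ on $\cA$ must be \emph{faithful on the universal C$^*$-algebra} $C^u(G)$ and approximated in a suitable sense by finite-dimensional representations. The factorization property gives us, via characterization (2) in \Cref{kfp-equivalences} applied to the amenable Haar trace $h$, a net of UCP maps $\varphi_k:C^u(G)\to M_{n_k}$ with $\tr_{n_k}\circ\varphi_k\to h$ pointwise and $\|\varphi_k(a^*b)-\varphi_k(a)^*\varphi_k(b)\|_{2,n_k}\to 0$ for all $a,b$. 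These are the ``almost representations'' of the introduction.

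The heart of the argument is to upgrade the $\varphi_k$ to genuine $*$-homomorphisms using rigidity. First I would restrict attention to a finite generating set $X\subseteq\ir(G)$ and the finitely many matrix counits $u^x_{ij}$, $x\in X$. The near-multiplicativity of $\varphi_k$ in $\|\cdot\|_{2,n_k}$, together with the fact that the $u^x$ are unitaries, should imply that $U^x_k:=(\id\otimes\varphi_k)u^x\in B(H_x)\otimes M_{n_k}$ is ``almost unitary'' and ``almost a corepresentation'' in the normalized Hilbert--Schmidt norm. Now form the direct-sum/ultraproduct representation: let $\pi=\bigoplus_k(M_{n_k}\subseteq B(\ell^2_{n_k}))$ acting on $H=\bigoplus_k \ell^2_{n_k}$, or better, pass to a tracial ultraproduct $\prod_k(M_{n_k},\tr_{n_k})/\omega$ — this is a von Neumann algebra with a faithful trace, and the limit of the $\varphi_k$ lands in it. The key point is that the trace condition $\tr_{n_k}\circ\varphi_k\to h$ means this ultraproduct map $\Phi:\cA\to\prod_\omega(M_{n_k},\tr_{n_k})$ reproduces the Haar state, hence is trace-preserving; combined with the asymptotic multiplicativity it is a trace-preserving $*$-homomorphism of $\cA$ into a hyperfinite-type tracial von Neumann algebra.

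Property (T) enters as follows. The vectors implementing ``$h$ is a trace vector'' give, in each approximating matrix block, a near-invariant vector for the almost-corepresentation $U^x_k$ — more precisely, the normalized trace vector $\xi_k\in\ell^2_{n_k}$ satisfies $\|U^x_k(\eta\otimes\xi_k)-\eta\otimes\xi_k\|\to 0$ for each fixed $x\in\ir(G)$ and $\eta\in H_x$, because both sides compute matrix entries of $\varphi_k$ against the trace, and $\tr_{n_k}\circ\varphi_k\to h$ with $h$ the Haar (hence $G$-invariant) state. Assembling these $\xi_k$ into a net shows $\mathbf{1}\preceq\pi$ in the sense of \Cref{def.t}(2) for the representation $\pi$ of $\cA$ obtained from the $\varphi_k$ (after correcting them to honest representations on the ultraproduct, where the defect vanishes). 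Property (T) then yields an \emph{honest} invariant unit vector, which forces the defect $\|\varphi_k(a^*b)-\varphi_k(a)^*\varphi_k(b)\|$ to be controllable in operator norm along a subnet, not merely in $\|\cdot\|_{2,n_k}$; a standard perturbation argument (nearby projections, stabilization of almost-representations — cf.\ the finite-dimensional almost-representation lemmas in \cite[Ch.~6]{bo}) then converts the $\varphi_k$, for $k$ large along this subnet, into genuine $*$-homomorphisms $\cA\to M_{m_k}$ that remain jointly faithful because they still recover $h$, which is faithful on $\cA$. That family witnesses RFD-ness, and finite generation is already in hand.

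\textbf{Main obstacle.} The delicate step is the passage from $\|\cdot\|_{2,n_k}$-control to operator-norm control of the multiplicativity defect: in general asymptotic multiplicativity in the trace norm does \emph{not} imply it in the C$^*$-norm, and this is exactly where property (T) must be used as a spectral-gap ``rigidity'' input — the near-invariant vectors must be promoted to a genuine invariant vector, and then one must argue that the spectral gap provided by (T) (uniform over the approximating representations, via a fixed finite $X$ and $\varepsilon$) bounds the relevant perturbations uniformly. Making this uniformity precise — i.e.\ that the ``(T) constants'' transfer to the finite-dimensional blocks $M_{n_k}$ rather than only to the limit object — is the crux, and is the quantum analogue of the corresponding maneuver in \cite{kirch}; I would isolate it as a lemma stating that under (T), an almost-corepresentation on a finite-dimensional space with a good approximate invariant vector is close to one admitting a genuine invariant vector, with explicit dependence only on $(X,\varepsilon)$.
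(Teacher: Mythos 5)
Your outline assembles the right ingredients (the net $\varphi_k$ from property (F), an approximately invariant ``trace-like'' vector, property (T) to promote it to a genuine invariant vector, and faithfulness of the Haar state to get separation), but it stalls exactly where you say it does, and the step you defer to a lemma is not a step the argument can be made to go through in that form. Asymptotic multiplicativity of UCP maps in $\|\cdot\|_{2,n_k}$ does not upgrade to operator-norm closeness to $*$-homomorphisms, and property (T) does not furnish a stability statement of the kind you invoke (``almost-corepresentation with an approximate invariant vector is close to one with a genuine invariant vector, uniformly in $(X,\varepsilon)$'') on the individual blocks $M_{n_k}$. Note also that your ultraproduct map $\Phi:\cA\to\prod_\omega(M_{n_k},\tr_{n_k})$ only re-proves hyperlinearity (cf.\ \Cref{re.hyp}); to get RFD you must descend from the ultraproduct back to finite-dimensional representations, and that descent is precisely the unproved crux. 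A further technical wrinkle: the $U^x_k=(\id\otimes\varphi_k)u^x$ are not corepresentations and the $\varphi_k$ are not $*$-representations, so \Cref{def.t} cannot be applied to them directly; some honest representation must be exhibited before (T) can be used.

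The paper's proof (following Kirchberg's original maneuver) sidesteps all of this by never perturbing the $\varphi_k$ at all. One writes $\varphi_k(\cdot)=T_k^*\pi(\cdot)T_k$ for isometries $T_k:\bC^{n_k}\to H$, where $\pi$ is a fixed universal representation of $C^u(G)$ on $H$, and considers the unit vectors $w_k=T_kT_k^*/\sqrt{n_k}$ in $H\otimes H^*\cong\mathcal{HS}(H)$, the carrier of the honest representation $\pi\otimes\pi^*$. The $\|\cdot\|_{2,n_k}$-almost-multiplicativity of $\varphi_k$, applied to the matrix counits $u^x_{ij}$, says exactly that the $w_k$ are almost invariant vectors for $\pi\otimes\pi^*$ in the sense of \Cref{def.t}. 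Property (T), applied to this single infinite-dimensional representation, then produces a genuine invariant vector $w$ close to some $w_k$, i.e.\ a nonzero Hilbert--Schmidt $\pi$-intertwiner; truncating $w$ by its spectral projections gives finite-rank $\pi$-intertwiners, whose ranges carry finite-dimensional subrepresentations of $\pi$. The trace convergence $\tr_{n_k}\circ\varphi_k\to\tau$ together with faithfulness of the Haar state shows these finite-dimensional representations separate the points of $\cA$, and finite generation comes from \Cref{pr.tfg} as you note. So the finite-dimensional representations are subrepresentations of the universal representation cut out by a compact intertwiner, not corrected versions of the $\varphi_k$; this is the idea your proposal is missing, and without it (or a proof of your stability lemma, which the paper never needs) the argument is incomplete.
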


Before going into the proof, let us fix our notation for a discrete
quantum group $\Gamma=\widehat{G}$ as above. We denote as usual by
$\cA=\cA(G)$ its CQG algebra, and by $\pi:\cA\to B(H)$ a
universal representation of the $C^*$-envelope $C^u(G)$ of $\cA$ on a Hilbert
space $H$. Our choice of $\pi$ is such that every UCP map $\psi:\cA\to M_n$
can be written as
\begin{equation*}
  \psi(\cdot)=T^*\pi(\cdot) T\text{ for an isometry } T:\bC^n\to H. 
\end{equation*}

For any two representations $\pi_1, \; \pi_2$ of a Hopf $\ast$-algebra $\cA$ on Hilbert spaces $H_1, \; H_2$, respectively, one can form the the tensor 
product representation $\pi_1 \otimes \pi_2: \cA \to B(H_1\otimes H_2)$, which is given (by abuse of notation) \[(\pi_1 \otimes \pi_2)(a):= (\pi_1 \otimes \pi_2)(\Delta a)  \qquad (a \in A),\] where $\Delta: \cA \to \cA \otimes \cA$ is the coproduct.

Also, for a CQG algebra $\cA$ associated to a unimodular discrete quantum group with antipode $S$ (which therefore extends to a $\ast$-anti-automorphism for the universal $C^*$-norm), 
and a $\ast$-representation $\pi$ of the $\ast$-algebra $\cA$ on a Hilbert space $H$, 
the dual representation $\pi^*$ of $\pi$ on the conjugate Hilbert space $\bar{H}$ is defined by 
$$
\pi^*(a) := J \pi(S(a))^* J^{-1}, \; \; a \in \cA
$$
where $J: H \rightarrow \bar{H}$ is the conjugation operator, and the dual space $H^*$ is identified 
with $\bar{H}$ as usual via Riesz representation. 
(Cf. the notion of contragradient representation in \cite{Wor87}.)

We will also regard the space $H\otimes H^*\cong \mathcal{HS}(H)$ of
Hilbert-Schmidt operators on $H$ as the underlying Hilbert space of
the tensor product representation $\pi\otimes \pi^*$ of the
Hopf $*$-algebra $\cA$. Under this identification, a vector $w\in H\otimes H^*$ is fixed under
$\pi\otimes \pi^*$ precisely when it is a
$\pi$-intertwiner.

We are now ready to address \Cref{th.tf}.

\begin{proof_of_tf}
  We begin by using property (F) to select a net $(\varphi_k)_k$ of
  UCP maps
\begin{equation*}
  \varphi_k:\cA\to M_{n_k} 
\end{equation*}
approximating the Haar state $\tau:\cA\to \bC$ as in part (2) of \Cref{thm-f}:
\begin{equation}\label{eq:net-mult}
  \|\varphi_k(a^*b)-\varphi_k(a)^*\varphi_k(b)\|_{2,n_k}\to 0
\end{equation}
where $\|x\|_{2,n}=\tr_n(x^*x)^{\frac 12}$ for $x\in M_n$ and
\begin{equation}\label{eq:net-tr}
  \tr_{n_k}\circ \varphi_k\to \tau
\end{equation}
pointwise. 

As described above, our choice of $\pi:\cA\to B(H)$ gives rise to isometries $T_k$ with
\begin{equation*}
  T_k:\bC^{n_k}\to H,\ \varphi_k(\cdot) = T_k^* \pi (\cdot) T_k. 
\end{equation*}


The almost-multiplicativity \Cref{eq:net-mult} of the net $(\varphi_k)$ can then be recast as follows.

Let $H_x$ be the $n$-dimensional carrier space of a unitary $u=u^x\in M_n(\cA)\cong B(H_x)\otimes \cA$ as explained in \Cref{se.prel} and consider the inflated maps
\begin{equation*}
 \psi_k:=\mathrm{id}\otimes \varphi_k:B(H_x)\otimes \cA\to M_{n_k\times n} 
\end{equation*}
where $ M_{n_k\times n}:= B(H_x) \otimes M_{n_k} \cong M_n(M_{n_k})$.
These are again UCPs and satisfy their own version of \Cref{eq:net-mult}, by simply inflating the latter:

\begin{equation}\label{eq:net-mult-bis}
    \|\psi_k(a^*b)-\psi_k(a)^*\psi_k(b)\|_{2,n_k\times n}\to 0
  \end{equation}
for $a,b\in B(H_x)\otimes \cA$.   

In particular, applying this to $a=u^*=b$, we obtain
\begin{equation}\label{eq:tr0}
  \tr_{n_k\times n}\left(1-\psi_k(u)\psi_k(u)^*\right)\to 0.
\end{equation}
Let
\begin{itemize}
\item $U$ denote the image of $u\in B(H_x)\otimes \cA$ through
\begin{equation*}
  \mathrm{id}\otimes\pi: B(H_x)\otimes \cA \to B(H_x)\otimes B(H). 
\end{equation*}
\item
  \begin{equation*}
    w_k=\frac{T_kT^*_k}{\sqrt{n_k}} 
  \end{equation*}
  so that $1\otimes w_k$ is a rescaled finite-rank projection in $B(H_x)\otimes B(H)$ of Hilbert-Schmidt norm $\sqrt n$.
  \item $q_k=1\otimes T_kT_k^*$. 
\end{itemize}
Reprising the computation in \cite[proof of Proposition 2.3]{kirch} with $\mathrm{Tr}$ standing for un-normalized traces, we obtain
\begin{align*}
  \tr_{n_k\times n}\left(1-\psi_k(u)\psi_k(u)^*\right) &=\frac 1{nn_k}\mathrm{Tr}\left(1-\psi_k(u)\psi_k(u)^*\right)\\
                                                       &=\frac 1{nn_k}\mathrm{Tr}\left(q_k - q_k U q_k U^*\right)\\
                                                       &=\frac 1{nn_k}\left(\|q_k\|^2_{HS}-\braket{q_k,U q_k U^*}_{HS}\right)\\
                                                       &=\frac 12\left\|\frac 1{\sqrt{nn_k}}(q_k-U q_k U^*)\right\|^2_{HS}.\\
\end{align*}
Since $q_k=\sqrt{n}(1\otimes w_k)$, this gives us
\begin{equation}\label{eq:tr-kirch}
  \tr_{n_k\times n}\left(1-\psi_k(u)\psi_k(u)^*\right) = \frac 1{2n}\left\|1\otimes w_k-U(1\otimes w_k)U^*\right\|^2_{HS}.
\end{equation}

\Cref{eq:tr0,eq:tr-kirch} now imply that the right hand side of \Cref{eq:tr-kirch} converges to zero, or equivalently
\begin{equation}\label{eq:1wu}
1\otimes w_k-U(1\otimes w_k)U^*\to 0\quad\text{in}\quad H_x\otimes H_x^*\otimes H\otimes H^*.
\end{equation}
For each $\eta\in H_x$ the map $H_x\otimes H_x^*\to H_x$ defined by
\begin{equation*}
  H_x\otimes H^*_x\cong B(H_x)\ni T\mapsto T\eta\in H_x
\end{equation*}
is continuous. Applying it to the $H_x\otimes H^*_x$ tensorand in \Cref{eq:1wu} we obtain
\begin{equation}\label{eq:etaw}
  \|\eta\otimes w_k-U^x(\mathrm{id}\otimes w_k)(U^x)^*(\eta\otimes\mathrm{id})\|\to 0,
\end{equation}
where the norm is taken in the Hilbert space $H_x\otimes H\otimes H^*$ and we have written $U^x$ for $U$. 

Now note that the second term inside the norm in \Cref{eq:etaw} is simply the action of $u^x$ on $\eta\otimes w_k$ through $\pi\otimes\pi^*$.
In conclusion, \Cref{eq:etaw} translates to $(w_k)$ providing almost containment of invariant vectors for $\pi\otimes\pi^*$ in the sense of \Cref{def.t}.

Property (T) now ensures that for any $\varepsilon>0$ we can find an index $k$ and a Hilbert-Schmidt operator $w\in H\otimes H^*$, fixed by $\cA$ via $\pi\otimes\pi^*$, such that
\begin{equation*}
  \|w-w_k\|<\varepsilon\text{ in }H\otimes H^*.
\end{equation*}
Note that we are implicitly using the `continuity constants' version of property (T) (\Cref{re.cont-const}).

As noted above, being $(\pi\otimes\pi^*)$-fixed implies that $w$, regarded as an operator on $H$, is a $\pi$-intertwiner. This means that we can further approximate it by finite-rank $\pi$-intertwiners arbitrarily well: Indeed, decomposing the positive, compact operator $w_k$ as $\int_{[0,\infty)}E_\lambda\ \mathrm{d}\lambda$ via its resolution of the identity provided by the spectral theorem, we can simply substitute for $w_k$ the finite-rank operator
\begin{equation*}
  w_k = \int_{\left[\frac 1m,\infty\right)} E_\lambda\ \mathrm{d}\lambda
\end{equation*}
for sufficiently large $m$. We abuse notation slightly and denote such finite-rank approximants by $w$ again.  Finally, the faithfulness of the Haar state $\tau$ on $\cA$ and \Cref{eq:net-tr} imply that the finite-dimensional representations
\begin{equation}\label{eq:fd-reps}
  P_w\pi(-) P_w:\cA\to B(w H) 
\end{equation}
with
\begin{equation*}
  P_w:=\text{range projection of }w
\end{equation*}
for finite-rank $w\in H\otimes H^*$ as above separate the elements of $\cA$. Indeed \Cref{eq:net-tr} says that if $\Tr$ denotes the standard (non-normalized) trace on $B(H)$, then
\begin{equation}\label{eq:wk-to-a}
  \braket{w_k,\pi(a)w_k}_{HS} = \Tr(w_k \pi(a) w_k)\to \tau(a)
\end{equation}
for arbitrary $a\in \cA$, where the HS (Hilbert-Schmidt) inner product is 
\begin{equation*}
  \braket{x,y}_{HS} = \Tr(x^*y). 
\end{equation*}

In general, for two vectors $\xi,\eta$ in a Hilbert space acted upon by the bounded operator $T$, we have
\begin{align*}
  |\braket{\xi,T\xi}-\braket{\eta,T\eta}|&=|\braket{\xi,T\xi}- \braket{\eta,T\xi}+\braket{\eta,T\xi}-\braket{\eta,T\eta}|\\ &= |\braket{\xi-\eta,T\xi}+\braket{\eta,T(\xi-\eta)}|. 
\end{align*}
The last term is dominated by
\begin{equation*}
  |\braket{\xi-\eta,T\xi}|+|\braket{\eta,T(\xi-\eta)}|\le \|T\|(\|\xi\|+\|\eta\|)\|\xi-\eta\|
\end{equation*}
(a similar inequality is noted in \cite[proof of Proposition 2.3]{kirch} for unitary $T$ and unit vectors $\xi$, $\eta$).


Together with the fact that  $\|w-w_k\|_{HS}<\varepsilon$ for small $\varepsilon$ and $\|w\|_{HS}=\|w_k\|_{HS}=1$ this returns
\begin{equation*}
  |\braket{w_,\pi(a)w}_{HS} - \braket{w_k,\pi(a)w_k}_{HS}|<2\varepsilon. 
\end{equation*}
It follows from \Cref{eq:wk-to-a} that $\tau(a)$ can be approximated arbitrarily well by
\begin{equation*}
  \braket{w,\pi(a)w}_{HS} = \Tr(w \pi(a) w)
\end{equation*}
 with finite-rank intertwiners $w$ as above. In particular, for every $a\ne 0$ there is some such $w$ for which the operator $P_w\pi(a^*a)P_w\in B(wH)$ does not vanish, hence the claim that the representations \Cref{eq:fd-reps} form a separating family.   

Since separability by finite-dimensional $*$-representations is precisely the residual finiteness requirement of \Cref{def.rf}, this concludes the proof. 
\end{proof_of_tf}

\section{Extensions and bicrossed products}\label{se.ap}

In this section we prove residual finiteness for certain discrete quantum groups constructed in \cite{bicr}. For background on bicrossed products we refer to \cite[Section 3]{bicr}, and offer only a brief recollection here.

Let $G$ and $\Gamma$ be a compact and discrete group respectively, forming a {\it matched pair} in the sense that they are realized as trivially-intersecting closed subgroups of a locally compact group $H$, with the property that the product $\Gamma G$ has full Haar measure in $H$. This amounts to giving a left action $\alpha$ of $\Gamma$ on $G$ and a right action $\beta$ of $G$ on $\Gamma$ satisfying certain compatibility conditions (e.g. \cite[Proposition 3.3]{bicr}).

To each quadruple $(\Gamma,G,\alpha,\beta)$ as above one can attach a compact quantum group $\bG$, as explained in \cite{vv} or \cite[$\S$3.2]{bicr}; we denote it by $\bG(\Gamma,G,\alpha,\beta)$ when we wish to be explicit about the matched pair structure, and reserve the present notation of $\Gamma$, $G$, $\alpha$, $\beta$ and $\bG$ throughout the current section.

The quantum groups $\bG$ (or rather their discrete duals) will provide, under certain circumstances, examples possessing the properties we have been concerned with throughout this paper. Note that according to the construction of bicrossed products (e.g. as in \cite[$\S$3.2]{bicr}), the CQG algebra $\cA=\cA(\bG)$ is simply the crossed product $\cA(G)\rtimes \Gamma$ with respect to the action of $\Gamma$ induced by $\alpha$; the coalgebra structure does not feature in any crucial capacity here.

We will need the following piece of terminology.

\begin{definition}\label{def.fin-orb}
  An action $\alpha$ of a discrete group $\Gamma$ on a compact Hausdorff topological space $X$ {\it has enough finite orbits} if the points of $X$ with finite orbit under the action form a dense subset of $X$.
\end{definition}

We now have

\begin{theorem}\label{th.bicr-rf}
  Let $(\Gamma,G,\alpha,\beta)$ be a matched pair. The following conditions are equivalent:
  \begin{enumerate}
    \renewcommand{\labelenumi}{(\arabic{enumi})}
  \item $\cA(\bG)$ is RFD;
  \item the action $\alpha$ has enough finite orbits and the group algebra $\bC\Gamma$ is RFD.
  \end{enumerate}
\end{theorem}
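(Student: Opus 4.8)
The structure of $\bG = \bG(\Gamma, G, \alpha, \beta)$ is that of a bicrossed product, so the CQG algebra $\cA(\bG)$ decomposes (as a vector space, and with a crossed-product-like multiplication/comultiplication) in terms of $C(G)$-valued functions and $\bC\Gamma$. The plan is to unpack the known description of $\cA(\bG)$ from \cite{bicr} and then read off when it admits enough finite-dimensional $*$-representations. Concretely, $\cA(\bG)$ sits inside (or is built from) the crossed product $C(G) \rtimes_\alpha \Gamma$ at the $*$-algebra level, with the two building blocks being the commutative algebra of (representative) functions on $G$ and the group algebra $\bC\Gamma$, glued by the action $\alpha$ (the action $\beta$ enters the comultiplication but not the $*$-algebra structure in a way that affects RFD-ness).

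\textbf{Direction (2) $\Rightarrow$ (1).} Assume $\alpha$ has enough finite orbits and $\bC\Gamma$ is RFD. First I would observe that a point $x \in G$ with finite $\Gamma$-orbit $\orb(x) = \{x_1, \dots, x_m\}$ produces a finite-dimensional $*$-representation of the crossed-product $*$-algebra: let $\Gamma_0 \le \Gamma$ be the (finite-index, but that is not what we need) stabilizer-coset data, form the induced-type representation on $\ell^2(\orb(x)) \otimes K$ where $K$ carries a finite-dimensional $*$-representation of the relevant stabilizer subalgebra — here is where RFD-ness of $\bC\Gamma$ (equivalently, of the stabilizers, using that subgroups of RFD discrete groups need not be RFD, so one must be slightly careful and instead induce a finite-dimensional representation of $\bC\Gamma$ itself and restrict) feeds in. The cleanest route: pick a finite-orbit point $x$ and a finite-dimensional $*$-representation $\rho$ of $\bC\Gamma$; combine the evaluation-at-orbit representation of $C(G)$ with $\rho$, twisted by $\alpha$, to get a finite-dimensional $*$-representation of $\cA(\bG)$. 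Then show that as $x$ ranges over the dense set of finite-orbit points and $\rho$ ranges over enough finite-dimensional representations of $\bC\Gamma$, the resulting family separates points of $\cA(\bG)$: an element of $\cA(\bG)$ is a finite sum $\sum_\gamma f_\gamma \cdot \gamma$ with $f_\gamma \in \cA(G)$, and evaluating functions on a dense set detects the $f_\gamma$ while the $\rho$'s detect the $\Gamma$-part. By \cite[Remark A.2]{So05}, RFD automatically forces Kac type, so no separate check is needed there.

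\textbf{Direction (1) $\Rightarrow$ (2).} Assume $\cA(\bG)$ is RFD. That $\bC\Gamma$ is RFD should follow from the fact that $\bC\Gamma$ embeds as a $*$-subalgebra (or is a $*$-quotient, via a conditional-expectation-type or coaction-type map) of $\cA(\bG)$, together with the fact that RFD passes to $*$-subalgebras — this is the same inheritance argument used in \Cref{cor.reduction}. For the ``enough finite orbits'' condition: a finite-dimensional $*$-representation of $\cA(\bG)$ restricts to one of the copy of $C(G)$ inside it, i.e. to a finite-dimensional $*$-representation of the commutative $*$-algebra $\cA(G)$ of representative functions, which is just a finite set of points of $G$; the crossed-product relations force this finite set to be $\Gamma$-invariant, hence a union of finite orbits. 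Since the family of finite-dimensional $*$-representations separates points of $\cA(\bG)$, in particular it separates points of $\cA(G)$, so the union of all these finite $\Gamma$-invariant sets is dense in $G$ (Stone–Weierstrass / the fact that $\cA(G)$ is dense in $C(G)$) — giving density of finite-orbit points.

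\textbf{Main obstacle.} The genuinely delicate point is matching the bicrossed-product algebra $\cA(\bG)$ precisely against a crossed product of $\cA(G)$ by $\Gamma$ — getting the $*$-algebra structure, the role of $\beta$, and the identification of the ``$C(G)$ part'' and ``$\bC\Gamma$ part'' exactly right from \cite{bicr}, and verifying that a finite-dimensional $*$-representation of $\cA(\bG)$ really does pin a finite $\Gamma$-invariant subset of $G$ (rather than, say, a finite subset of some quotient or of a continuous-spectrum piece). Care is also needed in the $(2) \Rightarrow (1)$ direction with the distinction between a subgroup of $\Gamma$ being RFD and $\bC\Gamma$ being RFD; inducing up from all of $\bC\Gamma$ rather than from stabilizers sidesteps this. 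Once the algebraic dictionary is in place, both implications are essentially the commutative crossed-product facts (a crossed product $C_0(X) \rtimes \Gamma$ is RFD iff $X$ has enough finite orbits and $\bC\Gamma$ is RFD) transported to the representative-function level, where everything is purely algebraic and no $C^*$-completion subtleties intervene.
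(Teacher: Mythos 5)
Your direction (2) $\Rightarrow$ (1) is correct and is in fact a slightly different (and arguably cleaner) route than the paper's: you induce a covariant representation on $\ell^2(F)\otimes K$ directly from an arbitrary finite-dimensional $*$-representation of $\bC\Gamma$ and a finite orbit $F$, and a matrix-coefficient computation (the coefficient against $\delta_{\gamma_0^{-1}x}\otimes\xi$ and $\delta_x\otimes\eta$ picks out the element $\sum_{\gamma x'=x} f_\gamma(x)\gamma$ of $\bC\Gamma$) shows these separate $\cA(G)\rtimes\Gamma$ using only RFD-ness of $\bC\Gamma$; the paper instead passes to finite quotients $\Gamma\to\Gamma_i$ with kernel inside the stabilizer of $F$ and uses maps $C(F)\rtimes\Gamma\to C(F)\rtimes\Gamma_i$, i.e.\ it invokes residual finiteness of $\Gamma$ rather than just RFD-ness of $\bC\Gamma$. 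Your identification of $\cA(\bG)$ with the $*$-algebra crossed product $\cA(G)\rtimes_\alpha\Gamma$ (with $\beta$ affecting only the coalgebra structure) agrees with the paper, as does the inheritance argument giving RFD-ness of $\bC\Gamma$ in (1) $\Rightarrow$ (2).

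However, the last step of your (1) $\Rightarrow$ (2) has a genuine gap. From a finite-dimensional representation you correctly extract a finite $\Gamma$-invariant set of points of $G$, but the inference ``the evaluations at the union $S$ of all such finite sets separate the points of $\cA(G)$, hence $S$ is dense'' is false as stated: $\cA(G)$ consists of representative functions, which cannot be supported in small open sets, so being a uniqueness set for $\cA(G)$ does not force density. For instance, for $G=U(1)$ no nonzero trigonometric polynomial vanishes on an infinite set such as a convergent sequence, which is far from dense; so ``no nonzero $f\in\cA(G)$ vanishes on $S$'' does not yield $\overline{S}=G$, and Stone--Weierstrass does not help because density of $\cA(G)$ in $C(G)$ says nothing about functions vanishing on $\overline{S}$. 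The paper's proof is phrased instead at the level of the full crossed product $C(G)\rtimes\Gamma$, where for a given open $U$ one has honest continuous functions supported in $U$; once such a function survives a finite-dimensional representation, that representation contains a character of $C(G)$ located in $U$, whose orbit is then finite. To repair your purely algebraic argument you need an extra input; one way is to note that the matched-pair identities $\alpha_\gamma(gh)=\alpha_\gamma(g)\,\alpha_{\beta_g(\gamma)}(h)$ and $\alpha_\gamma(e)=e$ imply that the set of finite-orbit points is a subgroup of $G$, so if it were not dense its closure would be a proper closed subgroup $H<G$, and Peter--Weyl for $G/H$ produces a nonzero representative function vanishing on $H\supseteq S$, contradicting the separation you established. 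Without some such argument the density claim does not follow.
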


Let us first record the following immediate consequence.

\begin{corollary}\label{cor.bicr-rf}
  Under the hypotheses of \Cref{th.bicr-rf}, if $\Gamma$ is finitely
  generated and $G$ is a Lie group, then $\widehat{\bG}$ is residually
  finite in the sense of \Cref{def.rf}.
\end{corollary}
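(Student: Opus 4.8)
The plan is to unwind \Cref{def.rf}: the discrete quantum group $\widehat{\bG}$ is residually finite exactly when $\cA(\bG)$ is RFD \emph{and} $\widehat{\bG}$ is finitely generated in the sense of \Cref{def.fg}. I would establish these two halves independently, drawing the RFD half from \Cref{th.bicr-rf} and the finite-generation half directly from the construction of $\bG$.

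For finite generation I would use that a compact Lie group $G$ carries a faithful finite-dimensional unitary representation, so that the matrix coefficients of this single representation together with their adjoints generate $\cA(G)$ as a $*$-algebra; in other words $\widehat{G}$ is a compact matrix quantum group. Since $\cA(\bG)$ is generated by the images of $\cA(G)$ and of $\bC\Gamma$ under the bicrossed-product construction of \cite{vv} and \cite[\S3.2]{bicr}, and since $\bC\Gamma$ is a finitely generated $*$-algebra because $\Gamma$ is a finitely generated group, I would conclude that $\cA(\bG)$ is finitely generated, i.e.\ $\widehat{\bG}$ is finitely generated.

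For the RFD half I would invoke the equivalence (1)$\Leftrightarrow$(2) of \Cref{th.bicr-rf}, which reduces the problem to checking that (2a) the action $\alpha$ has enough finite orbits and (2b) $\bC\Gamma$ is RFD. Condition (2b) I would derive from finite generation of $\Gamma$ together with the Lie hypothesis: viewing $\Gamma$ as a discrete subgroup of the ambient group $H$ of the matched pair, which is a Lie group once $G$ is, exhibits $\Gamma$ as a finitely generated linear group, hence residually finite by Mal'cev's theorem \cite{mal-lin}, and a residually finite discrete group has residually finite-dimensional group $*$-algebra. The delicate point in this step is the linearity of $\Gamma$, which I would extract from a faithful finite-dimensional representation of $H$ (or of its quotient by a suitable discrete central subgroup).

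The crux is (2a). I would try to produce a dense set of points of $G$ with finite $\Gamma$-orbit (\Cref{def.fin-orb}), the natural candidates being the elements of finite order, which are dense precisely because $G$ is a compact Lie group, or alternatively the common fixed points of a decreasing family of finite-index normal subgroups $\Gamma_k \le \Gamma$ with trivial intersection (any point fixed by some $\Gamma_k$ has orbit factoring through the finite quotient $\Gamma/\Gamma_k$, hence finite). I expect this density statement to be the main obstacle: for an action on a general compact space there need be no finite orbits at all (an irrational rotation of the circle has none), so the argument must genuinely exploit both the finite-dimensional manifold structure of the compact Lie group $G$ and the matched-pair compatibility relating $\alpha$ to $\beta$, which is what should rule out such rotation-type dynamics and force the finite-orbit points to be dense.
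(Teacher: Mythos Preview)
You have misread the statement. The phrase ``under the hypotheses of \Cref{th.bicr-rf}'' is meant to include the equivalent conditions (1)$\Leftrightarrow$(2) of that theorem, not merely the setup of a matched pair. The paper's own proof makes this explicit: it says that, given \Cref{th.bicr-rf}, ``all that is missing is the finite generation of the algebra $\cA(\bG)$'', and then spends its few lines deriving finite generation from the Lie hypothesis on $G$ and the finite-generation hypothesis on $\Gamma$, exactly as in your first paragraph. That is the entire proof; the RFD half is assumed, not proved.

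Your attempt to \emph{derive} condition (2) of \Cref{th.bicr-rf} from the bare data ``$\Gamma$ finitely generated, $G$ compact Lie, $(\Gamma,G,\alpha,\beta)$ a matched pair'' cannot succeed. \Cref{ex.fin-orb} in the paper is precisely a counterexample: $G=U_n$ is a compact Lie group, $\Gamma=\bZ$ is finitely generated, $\beta$ is trivial, and yet $\alpha$ (conjugation by a generic element) does \emph{not} have enough finite orbits. So the ``crux'' you identify, step (2a), is in fact false in general, and the matched-pair compatibility does not rule out rotation-type dynamics as you hoped. Your linearity argument for (2b) is also shaky (not every Lie group $H$ admits a faithful finite-dimensional representation), but this is moot given the above.

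In short: keep your finite-generation paragraph, which is correct and is essentially the paper's argument, and replace the RFD discussion by the observation that RFD is part of the standing hypothesis.
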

\begin{proof}
  Indeed, according to \Cref{def.rf} (and given \Cref{th.bicr-rf}) all that is missing is the finite generation of the algebra $\cA(\bG)$, which follows under the circumstances:

  Our hypothesis ensures that both $\bC \Gamma$ and the algebra $\cA(G)$ of representative functions on $G$ are finitely generated, and the conclusion follows from $\cA(\bG)\cong \cA(G)\rtimes \Gamma$. 
\end{proof}

\begin{proof_of_brf} We prove the two implications separately.

  \vspace{.5cm}

  {\bf (1) $\Rightarrow$ (2)} The RFD-ness of $\bC\Gamma$ follows from that of $\cA(\bG)$, since the former is a $*$-subalgebra of the latter.

  As for the finite-orbits condition, we can argue as follows. For any open subset $U\subset G$ a continuous function on $G$ with support in $U$ is 
  not annihilated in some finite dimensional representation of the full crossed product algebra $\pi:C(G)\rtimes \Gamma\to M_m(\bC)$.  It follows that some one-dimensional representation $\chi$ of $C(G)$ supported in $U$ is contained in $\pi$. Since the entire $\alpha$-orbit of $\chi$ is then contained in $\pi$ by the $\Gamma$-equivariance of the representation, that orbit must be finite.

  \vspace{.5cm}

  {\bf (2) $\Rightarrow$ (1)} Consider an $\alpha$-invariant finite subset $F\subset G$, and let $N\trianglelefteq \Gamma$ be the (finite-index) kernel of the morphism $\Gamma\to S_F$ into the symmetric group on the finite set $F$.

  Our assumption of residual finiteness for $\Gamma$ implies that its elements are separated by finite quotients
  \begin{equation*}
    \Gamma\to \Gamma_i.    
  \end{equation*}
Considering the resulting product morphisms $\Gamma\to \Gamma_i\times S_F$ instead, we may as well assume that the kernels of $\Gamma\to \Gamma_i$ are contained in $N$. But then the elements of the crossed product $C(F)\rtimes \Gamma$ are separated by homomorphisms of the form
  \begin{equation*}
    C(F)\rtimes \Gamma\to C(F)\rtimes \Gamma_i. 
  \end{equation*}
  onto finite-dimensional $C^*$-algebras. Indeed, since the underlying vector space of $C(F)\rtimes \Gamma$ is simply the tensor product $C(F)\otimes \bC\Gamma$, a non-zero element $x\in C(F)\rtimes \Gamma$ can be written uniquely as a non-empty sum
  \begin{equation*}
    \sum_{j}x_j\otimes g_j,\ x_j\ne 0\in C(F)
  \end{equation*}
  for distinct $g_j\in \Gamma$. Now simply choose $\Gamma\to \Gamma_i$ so that the images of $g_j$ are distinct.  
  
  Finally, the condition of having enough finite orbits ensures that homomorphisms of the form
  \begin{equation*}
    \cA\cong \cA(G)\rtimes \Gamma\to C(F)\rtimes \Gamma
  \end{equation*}
  separate the elements of $\cA$. This concludes the proof.
\end{proof_of_brf}

Examples of actions of residually finite discrete groups on compact (Lie) groups that do not meet the requirements of \Cref{th.bicr-rf} are easily constructed:

\begin{example}\label{ex.fin-orb}
  Let $G$ be a unitary group $U_n$, $n\ge 2$ and $\alpha$ the action of $\Gamma=\bZ$ via conjugation by an element $x\in U_n$ that is sufficiently generic, in the sense that its eigenvalues $\lambda_i$ satisfy $\lambda_i^m\ne \lambda_j^m$ for all $i\ne j$ and $m\in \bZ\setminus\{0\}$.

  The only elements of $U_n$ with finite orbit under $\alpha$ are those that commute with some power $x^m$, $m\in \bZ\setminus\{0\}$, and hence preserve all eigenspaces of $x$. Certainly, this is not a dense subset of $U_n$.
\end{example}

According to \Cref{th.RFD_FP}, we now also have

\begin{corollary}\label{cor.bicr-fp}
  If $G$ is finite and $\Gamma$ is residually finite, then the discrete quantum group $\widehat{\bG}=\widehat{\bG}(\Gamma,G,\alpha,\beta)$ has property (F).  \qedhere
\end{corollary}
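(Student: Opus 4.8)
The plan is to derive Corollary~\ref{cor.bicr-fp} directly from \Cref{th.RFD_FP} together with the structural description of $\cA(\bG)$ already established in the proof of \Cref{th.bicr-rf}. Recall that when $G$ is finite, the algebra $\cA(G)$ of representative functions is just $C(G)$, a finite-dimensional $C^*$-algebra, and the CQG algebra of $\bG$ is the crossed product $\cA(\bG)\cong C(G)\rtimes\Gamma$ with respect to the action of $\Gamma$ induced by $\alpha$.

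First I would verify that the two hypotheses of \Cref{th.bicr-rf}(2) are met. The RFD-ness of $\bC\Gamma$ is exactly the hypothesis that $\Gamma$ is residually finite (a finitely generated residually finite discrete group has RFD group algebra; but in fact for the present purpose we only need $\bC\Gamma$ to be RFD, which is what residual finiteness of $\Gamma$ gives). Second, the action $\alpha$ of $\Gamma$ on the \emph{finite} space $G$ automatically has enough finite orbits in the sense of \Cref{def.fin-orb}: every point of a finite space has finite orbit, so the finite-orbit points are all of $G$, trivially dense. Hence by \Cref{th.bicr-rf}, $\cA(\bG)$ is RFD, i.e.\ $\widehat{\bG}$ is an RFD discrete quantum group.

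Then I would simply invoke \Cref{th.RFD_FP}: an RFD discrete quantum group has property~(F). Since $\widehat{\bG}$ is RFD, it has property~(F), which is the assertion of the corollary. Note that \Cref{th.RFD_FP} does not require finite generation, so there is no need here to check that $\cA(\bG)$ is finitely generated (in contrast to \Cref{cor.bicr-rf}, where finite generation is needed to pass from RFD to the "RF" terminology of \Cref{def.rf}).

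I do not anticipate any genuine obstacle: the corollary is a clean two-step composition of \Cref{th.bicr-rf} and \Cref{th.RFD_FP}, with the only thing to check being the trivial observation that a finite $G$ makes the "enough finite orbits" condition vacuous and that $\Gamma$ residually finite yields $\bC\Gamma$ RFD. If I wanted to be careful, the one point worth a sentence is making explicit that residual finiteness of the discrete group $\Gamma$ implies $\bC\Gamma$ is RFD as a $*$-algebra — which is classical (separate group elements by finite quotients, then use that finite-dimensional group algebras are RFD, e.g.\ sums of matrix blocks), and is precisely the content one reads off from \Cref{def.rf} restricted to classical groups.
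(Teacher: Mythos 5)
Your proposal is correct and matches the paper's (implicit) argument exactly: the corollary is stated with its proof absorbed into the phrase ``According to \Cref{th.RFD_FP}, we now also have,'' i.e.\ one applies \Cref{th.bicr-rf} (finite $G$ makes the finite-orbit condition automatic, and residual finiteness of $\Gamma$ gives RFD-ness of $\bC\Gamma$) and then \Cref{th.RFD_FP}. Your added remark that finite generation is not needed here, unlike in \Cref{cor.bicr-rf}, is accurate.
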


On the other hand, \cite{bicr} also provides us with examples fitting into the setup of \Cref{se.ft}:

\begin{corollary}\label{cor.all}
  Suppose $G$ is finite and $\Gamma$ has properties (T) and (F). Then, $\widehat{\bG}$ is residually finite and has properties (T) and (F).
\end{corollary}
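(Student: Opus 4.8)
The plan is to deduce \Cref{cor.all} by combining \Cref{cor.bicr-rf}, \Cref{cor.bicr-fp}, and the behaviour of property (T) under the bicrossed product construction, together with the main results of the paper. The statement has three assertions about $\widehat{\bG}$: residual finiteness, property (T), and property (F). My strategy is to establish property (T) and property (F) first, and then obtain residual finiteness essentially for free as a consequence of \Cref{th.tf}, rather than re-deriving it from \Cref{th.bicr-rf}.

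\textbf{Step 1: Property (F).} Since $G$ is finite, $\cA(G)$ is finite-dimensional, hence trivially RFD, and $\bC\Gamma$ is RFD because $\Gamma$ has property (F) together with property (T); indeed property (T) forces finite generation (\Cref{pr.tfg} in the quantum case, classically \cite{t}), and by \Cref{th.tf} a discrete group with (T) and (F) is residually finite, so $\bC\Gamma$ is RFD. Then \Cref{cor.bicr-fp} (equivalently: $G$ finite, $\Gamma$ residually finite $\Rightarrow$ property (F)) applies directly and gives that $\widehat{\bG}$ has property (F). Alternatively one can invoke \Cref{cor.bicr-rf} to see $\cA(\bG) = \cA(G)\rtimes\Gamma$ is RFD and finitely generated, so $\widehat{\bG}$ is residually finite, and then \Cref{th.RFD_FP} gives property (F); this also disposes of the residual finiteness claim.

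\textbf{Step 2: Property (T).} This is the step I expect to be the main obstacle, since it is the one ingredient not packaged as a corollary in the excerpt. The point is that when $G$ is finite the compact quantum group $\bG$ is, up to the finite piece $\cA(G)$, really a ``twisted'' version of $\Gamma$: the CQG algebra is $\cA(G)\rtimes\Gamma$, a finite module over $\bC\Gamma$, and one expects property (T) to pass from $\Gamma$ to $\widehat{\bG}$ exactly as in the classical fact that a group containing a finite-index (or here, co-finite) subgroup with (T) has (T). Concretely, I would argue that $\widehat{\bG}$ is an extension of $\widehat{\Gamma}$ by the (compact, hence trivially (T)) dual of the finite quantum group $G$, and invoke the preservation of property (T) under extensions of discrete quantum groups — which the introduction advertises as one of the results of \Cref{se.ap} (``As is the case classically, property (T) is preserved under extensions''). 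Thus $\widehat{\bG}$ has property (T) because $\widehat{\Gamma}$ does and $\widehat{G}$ trivially does (being finite). If instead one wants a self-contained argument at this point in the paper, one checks directly from \Cref{def.t}: given a representation $\pi$ of $\cA(\bG)$ almost containing invariant vectors, restrict to $\bC\Gamma$, use property (T) of $\Gamma$ to extract a $\Gamma$-invariant vector, then average over the finite set $\ir(G)$ (equivalently over the finite quantum group $G$) to produce a vector invariant under all of $\cA(\bG)$; finiteness of $G$ is what makes this averaging legitimate.

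\textbf{Step 3: Residual finiteness.} With property (T) and property (F) in hand for $\widehat{\bG}$, \Cref{th.tf} immediately yields that $\widehat{\bG}$ is residually finite, completing the proof. (As noted, this is redundant with the route through \Cref{cor.bicr-rf}, but it is the shortest packaging and illustrates the main theorem in action.) The only genuine content beyond citing corollaries is Step 2, and there the essential input is that a finite quantum group contributes nothing obstructive — every state is ``averageable'' and the dual is trivially rigid — so property (T) is inherited from the discrete quotient $\Gamma$ exactly as in the classical theory of extensions.
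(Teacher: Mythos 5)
Your treatment of the residual finiteness and property (F) assertions uses the same ingredients as the paper, only repackaged: the paper first gets residual finiteness of $\Gamma$ from Kirchberg's classical theorem \cite[Theorem 1.1]{kirch}, then applies \Cref{th.bicr-rf} (with the finite-orbit condition automatic since $G$ is finite) to get that $\widehat{\bG}$ is RF, and finally deduces (F); your route through \Cref{cor.bicr-fp} (or \Cref{cor.bicr-rf} plus \Cref{th.RFD_FP}), with residual finiteness of $\widehat{\bG}$ recovered at the end from \Cref{th.tf}, is an equivalent reshuffling. The genuine divergence is property (T): the paper does not argue it internally at all, but simply cites \cite[Theorem 4.3]{bicr}, which characterizes property (T) for duals of compact bicrossed products. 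Your proposal to deduce it instead from \Cref{pr.extt} is attractive and in the spirit of the paper's \Cref{se.ap}, but it rests on a claim the paper never establishes: that the bicrossed product yields an exact sequence $\bC\to \cA(G)\to\cA(\bG)\to\bC\Gamma\to\bC$ of Hopf $*$-algebras in the sense of \Cref{def.ext} (with $N=\widehat{G}$, $K=\Gamma$), i.e.\ that $\cA(G)$ is a normal Hopf $*$-subalgebra with Hopf quotient $\bC\Gamma$. This is true (it is the standard abelian/bismash extension attached to a matched pair, and faithful flatness is clear since $\cA(\bG)\cong \cA(G)\otimes\bC\Gamma$ as a module), but it is a verification you must actually carry out, not merely assert; one also needs the easy fact that the finite quantum group $\widehat{G}$ has (T), so that the pair $(\widehat{\bG},\widehat{G})$ has relative (T) as noted before the proof of \Cref{pr.extt}. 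Note also that \Cref{pr.extt} occurs after \Cref{cor.all} in the paper, so this route would require reordering.

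Your fallback ``self-contained'' argument for (T) has a concrete gap in its order of operations. You propose to restrict the representation to $\bC\Gamma$, extract a $\Gamma$-invariant vector by property (T) of $\Gamma$, and then ``average over $G$''. The averaging here amounts to compressing by the projection $\pi(\delta_e)$, where $\delta_e\in C(G)$ is the support of the counit; but a $\Gamma$-invariant vector chosen in the whole space may be annihilated by $\pi(\delta_e)$ (for instance, in a direct sum $\varepsilon\oplus\pi_2$ where $C(G)$ acts on the second summand by evaluation at an $\alpha$-fixed point distinct from $e$ and $\Gamma$ acts trivially there). The correct argument must run in the other order: since $\varepsilon(\delta_e)=1$, the almost-invariant net $(v_n)$ satisfies $\|\pi(\delta_e)v_n-v_n\|\to 0$, so it concentrates in the subspace $\pi(\delta_e)H$, which is $\Gamma$-invariant because $\alpha_\gamma(e)=e$ and on which $C(G)$ acts through $\varepsilon$; applying property (T) of $\Gamma$ to the representation on this subspace then produces a vector invariant under all of $\cA(\bG)$. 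This concentration step is exactly the care taken in the paper's proof of \Cref{pr.extt} (projections of the almost-invariant vectors onto the fixed subspace have norms tending to $1$), and it cannot be skipped.
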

\begin{proof}
  \cite[Theorem 4.3]{bicr} ensures that $\widehat{\bG}$ has property (T). On the other hand, \cite[Theorem 1.1]{kirch} shows that $\Gamma$ is residually finite, and hence \Cref{th.bicr-rf} above is applicable to prove that $\widehat{\bG}$ is RF. It then also has property (F) by \Cref{th.tf}.
\end{proof}

\Cref{th.bicr-rf} fits into the general framework of proving that certain properties for discrete quantum groups (in this case the RFD property) are preserved under taking {\it extensions}:
\begin{definition}\label{def.ext}
  Let $N$ and $K$ be discrete quantum groups with underlying group algebras $\cB=\cA(\widehat{N})$ and $\cC=\cA(\widehat{K})$. An {\it extension of $K$ by $N$} is a discrete quantum group $\Gamma$ with underlying group algebra $\cA=\cA(\widehat{\Gamma})$ fitting into an exact sequence
  \begin{equation}\label{eq:seq}
    \bC\to \cB\to \cA\to \cC\to \bC
  \end{equation}
  of Hopf ($*$-)algebras in the sense of \cite[Definition 1.2.0, Proposition 1.2.3]{ad}.
\end{definition}
See also \cite[p. 523]{Wan13} for a discussion of exact sequences in the dual context of {\it compact} quantum groups, which amounts to the same exactness condition imposed in \Cref{def.ext}.

\begin{remark}\label{re.freeqg}
  \Cref{def.ext} specializes to the usual notions of exactness and extension for ordinary (i.e. non-quantum) discrete groups, and \Cref{th.bicr-rf} provides sufficient conditions for a special class of extension (arising as a bicrossed product) of RFD discrete quantum groups to retain the RFD property. \Cref{ex.fin-orb}, however, shows that the RFD property is not inherited by crossed products from their factors. This contrasts with the situation for discrete groups, where split extensions (i.e. those expressible as crossed products) of residually finite groups by finitely-generated residually finite groups are again residually finite by \cite{mal-ext}.
\end{remark}

As far as property (F) is concerned, we have the following positive result.

\begin{proposition}\label{pr.fext}
  A semidirect product of a discrete group with property (F) by a residually finite and finitely generated discrete group again has property (F).
\end{proposition}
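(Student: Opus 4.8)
The plan is to reduce the semidirect product $\Gamma = K \ltimes N$ (with $N$ having property (F) and $K$ finitely generated and residually finite) to a filtered union of crossed products that are \emph{finite} extensions of $N$, and then to check directly that property (F) is inherited. First, using residual finiteness of $K$, pick a decreasing net $(K_j)$ of finite-index normal subgroups with trivial intersection; since $K$ acts on $N$ and hence on $C^*(N)$, each $K_j$ is still $K$-invariant after intersecting with all its $K$-conjugates (there are finitely many, by finite index), so we may assume each $K_j \trianglelefteq K$. Then $C^*(\Gamma) = C^*(N) \rtimes_{\max} K$ is the filtered inductive limit, over the finite quotients $K/K_j$, of the algebras $C^*(N) \rtimes_{\max} (K/K_j)$ — here one uses that $\mathbb{C}[K] = \varinjlim_j \mathbb{C}[K/K_j]$ as $*$-algebras together with $K$-equivariance. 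By \Cref{pr.colim_gen}, the Haar-type trace (the canonical tracial state coming from the trace on $C^*(N)$ tensored with the normalized counting trace on the finite group, i.e.\ the trace whose GNS space is $\ell^2(\Gamma)$) is amenable if and only if its restriction to each $C^*(N) \rtimes (K/K_j)$ is amenable. So it suffices to treat a single finite extension $1 \to N \to \Lambda \to F \to 1$ with $F = K/K_j$ finite and the canonical trace $\tau_\Lambda$ on $C^*(\Lambda)$.

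Next I would verify that $\tau_\Lambda$ on $C^*(\Lambda)$ is amenable given that $\tau_N$ on $C^*(N)$ is. The cleanest route is characterization (2) in Theorem \ref{kfp-equivalences}: take a net of UCP maps $\varphi_k : C^*(N) \to M_{n_k}$ witnessing amenability of $\tau_N$, and build UCP maps $\widetilde\varphi_k : C^*(\Lambda) \to M_{n_k}\otimes M_{|F|} \cong M_{n_k |F|}$ by an induced-representation / Fell-bundle construction: choosing a set-theoretic section $F \to \Lambda$, one identifies $C^*(\Lambda)$ with a ``twisted'' $|F|\times|F|$-matrix algebra over $C^*(N)$ (entries indexed by $F$, diagonal blocks in $C^*(N)$, off-diagonal entries in the translates $C^*(N)\cdot s$), and applies $\varphi_k$ entrywise after pushing everything into $C^*(N)$ via the $F$-action. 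Unitality and complete positivity of $\widetilde\varphi_k$ follow because the construction is (up to conjugating by a fixed unitary) ``amplify then compress'', i.e.\ $\widetilde\varphi_k = (\mathrm{id}\otimes\varphi_k)\circ E$ for a suitable $*$-homomorphism-like completely positive $E: C^*(\Lambda)\to M_{|F|}(C^*(N))$; the almost-multiplicativity estimate $\|\widetilde\varphi_k(a^*b)-\widetilde\varphi_k(a)^*\widetilde\varphi_k(b)\|_{2}\to 0$ is then inherited from that of $\varphi_k$ on a dense $*$-subalgebra (finitely supported elements of $\Lambda$), using that on such elements everything reduces to finitely many applications of the multiplicativity defect of $\varphi_k$; and $\mathrm{tr}\circ\widetilde\varphi_k \to \tau_\Lambda$ because the normalized trace on $M_{n_k|F|}$ restricted to the diagonal $F$-blocks recovers $\frac{1}{|F|}\sum (\mathrm{tr}_{n_k}\circ\varphi_k)$, which tends to $\frac{1}{|F|}\cdot(\text{value on the block})$, matching $\tau_\Lambda$ on group elements (which vanishes off $N$ and equals $\tau_N$ on $N$).

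Alternatively — and this may be the slicker packaging — one can use characterization (1): amenability of $\tau_N$ gives a $C^*(N)$-central state on $B(\ell^2(N))$; induce it up along $N \le \Lambda$ (average the conjugates under the finite group $F$, which is legitimate precisely because $[\Lambda:N] < \infty$) to obtain a $C^*(\Lambda)$-central state on $B(\ell^2(\Lambda)) = B(\ell^2(N)\otimes \ell^2(F))$, noting that $\ell^2(\Lambda)$ is the GNS space of $\tau_\Lambda$. Then unwind the filtered-limit step: amenable traces are weak$^*$-closed (\cite[Proposition 2.12]{BW}), so the limit trace on $C^*(\Gamma)$ is amenable, which is exactly property (F) for $\Gamma = K\ltimes N$. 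The main obstacle I anticipate is the finite-extension step — specifically checking, without circularity and with honest control of the $2$-norm defects, that the induced UCP maps (or the induced central state) genuinely witness amenability; the $K$-equivariant reorganization of the crossed product as a filtered limit and the stability-under-weak$^*$-limits are comparatively routine given \Cref{pr.colim_gen} and the cited stability properties from \cite{BW}.
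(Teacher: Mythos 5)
Your plan collapses at its very first reduction. You assert that $\bC K=\varinjlim_j \bC[K/K_j]$ and hence that $C^*(\Gamma)=C^*(N)\rtimes_{\max}K$ is a filtered inductive limit of the algebras $C^*(N)\rtimes_{\max}(K/K_j)$, so that \Cref{pr.colim_gen} applies. Residual finiteness of $K$ only provides surjections $\bC K\to \bC[K/K_j]$, i.e. an embedding of $\bC K$ into the \emph{product} $\prod_j\bC[K/K_j]$; it does not exhibit $\bC K$ as a colimit of these finite-dimensional algebras (a filtered colimit of finite-dimensional $*$-algebras is locally finite-dimensional, which $\bC K$ is not for infinite $K$). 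Worse, the algebras $C^*(N)\rtimes_{\max}(K/K_j)$ are in general not even defined, because the action of $K$ on $N$ need not factor through any finite quotient (take $K=\bZ$ acting by an automorphism of infinite order), and $\Gamma=N\rtimes K$ admits no natural surjection onto $N\rtimes (K/K_j)$: killing $K_j$ normally inside $\Gamma$ also kills all elements $n^{-1}(k\cdot n)$ with $k\in K_j$, $n\in N$, and can collapse $N$ drastically. This is exactly the difficulty that makes the proposition non-trivial. Your second stage --- passing property (F) from a group to a finite-index overgroup --- is sound in principle and is close to a step the paper does carry out (it lifts (F) across finite-index inclusions $\Omega\subset\Gamma$ using the minimal-tensor-norm continuity of the functional $\mu(\gamma\otimes\eta)=\delta_{\gamma,\eta}$ and the decomposition of $\bC\Gamma\otimes\bC\Gamma$ into finitely many translates of $\bC\Omega\otimes\bC\Omega$), but with the filtered-limit step gone it has nothing to feed on. (Incidentally, the canonical trace on $C^*(\Lambda)$ vanishes off the identity, not merely off $N$.)

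You have also swapped the roles of the two factors relative to the configuration the paper actually treats: in its proof the finitely generated residually finite group is the \emph{normal} factor $N$, while the acting group $K$ is the one with property (F). The argument invokes \cite[Theorem 1]{arz-semidir}, which reduces the statement to showing (a) that fully residually (F) groups have (F) --- verified via closure of (F) under direct products (a topological-generation argument as in \cite{BW,bcv}) and under passage to subgroups --- and (b) that semidirect extensions with finite kernel and (F) quotient have (F), which is where the finite-index lifting via $\mu$ is used. The mechanism behind this reduction is that finite generation of $N$ allows one to replace any finite-index subgroup of $N$ by a $K$-invariant one, so that $\Gamma$ genuinely surjects onto groups of the form $(\text{finite})\rtimes K$ and is fully residually (F); in your configuration (normal factor with (F), acting group residually finite) no analogous supply of quotients exists, so the inductive-limit shortcut cannot be repaired within your setup, and an argument along the lines of \cite{arz-semidir} would have to replace it.
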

\begin{proof}
  According to \cite[Theorem 1]{arz-semidir} it suffices to argue that
  \begin{enumerate}
    \renewcommand{\labelenumi}{(\alph{enumi})}
  \item fully residually property (F) groups retain the property, and
  \item semidirect extensions of property (F) groups by finite kernels have (F),
  \end{enumerate}
  where we say that a group $\Gamma$ fully residually has a given property $\cP$ provided for every finite subset $F\subset\Gamma$ some morphism $\Gamma\to K$ to a group with property $\cP$ is one-to-one on $F$.

  \vspace{.5cm}

  {\bf (a)} To argue the first item, note first that property (F) is preserved under taking products. Indeed, if $\Gamma=\prod_i \Gamma_i$, then in the language of \cite{bcv} the compact quantum group $\widehat{\Gamma}$ is topologically generated by $\widehat{\Gamma}_i$, and the conclusion follows from a simple adaptation of \cite[Theorem 3.3]{BW} to more than two compact quantum subgroups generating an ambient compact quantum group.

  Next, it follows from the characterization of property (F) in terms of a net of almost-representations $\varphi_k$ (see the proof of \Cref{th.tf} above and \cite[Theorem 6.2.7]{bo}) that property (F) is preserved by passing to subgroups.

  Finally, being fully residually (F) entails embeddability into a product of property-(F) groups, hence the conclusion.
  
  \vspace{.5cm}

  {\bf (b)} Consider a semidirect product $\Gamma=N\rtimes K$ with $N$ finite and $K$ having property (F). Consider the subgroup $K'\subset K$ consisting of elements that fix $N$ pointwise. The product
  \begin{equation*}
    NK'\subset \Gamma
  \end{equation*}
  is direct and hence (F), and moreover $K'\subseteq K$ has finite index. To verify the latter assertion, observe that $K'$ is the kernel of morphism $K\to S_N$ into the symmetric group on $N$ induced by the permutation of $N$ by $K$-conjugation. 

  In conclusion, it suffices to argue that property (F) lifts from finite-index subgroups
  \begin{equation*}
    \Omega\subset \Gamma 
  \end{equation*}
  (i.e. if $[\Gamma:\Omega]<\infty$ and $\Omega$ has (F) then so does $\Gamma$). This follows for instance from the characterization of property-(F) groups by the requirement that the linear functional
  \begin{equation*}
    \mu:\bC\Gamma\otimes \bC\Gamma\to \bC
  \end{equation*}
  defined by $(\gamma,\eta)\mapsto \delta_{\gamma,\eta}$ be continuous with respect to the minimal tensor product norm (e.g. \cite[Theorem 6.2.7 (3)]{bo}) on $C^*(\Gamma) \otimes C^*(\Gamma)$. Given that this condition holds for the finite-index subgroup $\Omega$ of $\Gamma$, it holds for $\Gamma$:

  When equipped with the minimal tensor product norm from above, the topological vector space $\bC\Gamma\otimes \bC\Gamma$ is a direct sum of finitely many subspaces isomorphic to $\bC\Omega\otimes \bC\Omega$ (translates by $(\gamma,\eta)$ with $\gamma$ and $\eta$ ranging over a finite system of representatives for the cosets of $\Omega$ in $\Gamma$). Since the restriction of $\mu$ to all of these subspaces is continuous by assumption, the conclusion follows.
\end{proof}

\begin{remark}\label{re.fext}
  Note that the bicrossed product discrete quantum group 
  $\widehat{\bG}(\Gamma,G,\alpha,\beta)$  in
  \Cref{ex.fin-orb} has property (F). The reason is that the
  underlying $C^*$-algebra $C(U_n)\rtimes\bZ$ is nuclear (because
  $\bZ$ is amenable; see e.g. \cite[Theorem 4.2.6]{bo}) and hence
  there is no distinction between the maximal and minimal tensor
  products appearing in the original definition of the factorization
  property. Therefore, by \Cref{th.tf,th.bicr-rf}, 
  the quantum group  $\widehat{\bG}(\Gamma,G,\alpha,\beta)$
  in \Cref{ex.fin-orb}  does not have property (T). 
\end{remark}

We end the present section with a discussion of the preservation of
property (T) under extensions of discrete quantum groups in the sense
of \Cref{def.ext}. This is a natural question to pose, given the
classical version (e.g. \cite[Lemma 7.4.1]{zim} or \cite[Proposition
1.7.6]{t}).  The result we prove, analogous to its classical version,
involves the following notion (cf. \cite[Definition 1.4.3]{t}).

\begin{definition}\label{def.relt}
  Let $N\le \Gamma$ be an inclusion of discrete quantum groups in the
  sense that we have an embedding
  $\cB=\cA(\widehat{N})\to \cA(\widehat{\Gamma})=\cA$ of CQG algebras.   The pair $(\Gamma,N)$ {\it has property (T)} if every
  $\cA$-representation that almost contains invariant vectors admits a
  non-zero vector invariant under $\cB$.
\end{definition}

\begin{remark}
  \Cref{def.relt} agrees with \cite[Definition 4.1]{bicr} once one
  accounts for the fact that the latter is formulated in terms of the
  compact Pontryagin duals to the discrete quantum groups of interest
  here. 
\end{remark}

\begin{proposition}\label{pr.extt}
  Consider an exact sequence of discrete quantum groups as in
  \Cref{def.ext}. Then, $\Gamma$ has property (T) if and only if $K$
  and the pair $(\Gamma,N)$ do.
\end{proposition}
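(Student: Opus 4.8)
The plan is to mirror the classical proof (as in \cite[Proposition~1.7.6]{t} or \cite[Lemma~7.4.1]{zim}), adapted to the quantum setting using the representation-theoretic formulation of property (T) from \Cref{def.t}. Throughout, a $\Gamma$-representation means a $\ast$-representation of $\cA=\cA(\widehat\Gamma)$, and restriction along $\cC\to\cA$ (resp. along $\cB\hookrightarrow\cA$) lets us view such a representation as a $K$-representation (resp. restrict attention to $\cB$-invariant vectors). Two preliminary observations drive everything. First, since the sequence $\bC\to\cB\to\cA\to\cC\to\bC$ is exact, a $\Gamma$-representation $\pi$ on $H$ restricts to a representation of $\cC$ precisely on the subspace $H^{\cB}$ of $\cB$-invariant vectors (this is the quantum analogue of ``$K=\Gamma/N$ acts on the $N$-invariants''; it follows from $\cC\cong\cA/\cA\cB^+$ together with the fact that $\cB$-invariance of a vector is exactly the condition that $\pi$ kills $\cA\cB^+$ on that vector). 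Second, we will repeatedly use \Cref{re.1phi}: almost containing invariant vectors is witnessed by a net of unit vectors $(v_n)$ with $U^x(\eta\otimes v_n)-\eta\otimes v_n\to 0$ for every $x$ and every unit $\eta\in H_x$.

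For the \emph{forward direction}, suppose $\Gamma$ has property (T). That $(\Gamma,N)$ has property (T) is immediate from the definitions: if an $\cA$-representation $\pi$ almost contains invariant vectors then it contains a $\Gamma$-invariant vector, which is in particular $\cB$-invariant. For $K$: let $\rho:\cC\to B(H)$ almost contain invariant vectors as a $K$-representation; pull it back along $\cA\to\cC$ to a $\Gamma$-representation $\pi$. A $(Y,\varepsilon)$-invariant vector for $\rho$, where $Y\subseteq\ir(\widehat K)$ is finite, pulls back to an $(X,\varepsilon)$-invariant vector for $\pi$ where $X\subseteq\ir(G)$ is the (finite) set of irreducibles lying over $Y$; since every irreducible of $G$ restricts through $\cC$ up to the ones already accounted for — more carefully, one checks that almost invariance for $\pi$ on all of $\ir(G)$ reduces to almost invariance for $\rho$ on $\ir(\widehat K)$ because the matrix counits of $\cA$ act through $\cC$ on $\pi$ — we get $\1\preceq\pi$. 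By property (T) for $\Gamma$, $\pi$ has a fixed vector $v$; being fixed by all of $\cA$, $v$ is annihilated appropriately so that its class is a $\cC$-fixed, hence $K$-fixed, vector for $\rho$. (The one point needing care is that $\pi$ being a pullback forces any $\Gamma$-invariant vector to descend to a $K$-invariant vector, which is the exactness input again.)

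For the \emph{converse}, suppose $K$ and $(\Gamma,N)$ both have property (T); let $\pi:\cA\to B(H)$ satisfy $\1\preceq\pi$. By property (T) for the pair $(\Gamma,N)$, the subspace $H^{\cB}$ of $\cB$-invariant vectors is non-zero. By the first preliminary observation, $\pi$ restricts to a genuine $\cC$-representation $\bar\pi$ on $H^{\cB}$, i.e.\ a $K$-representation. The crux is to show $\1\preceq\bar\pi$ as a $K$-representation; once we have that, property (T) for $K$ yields a $K$-fixed vector in $H^{\cB}$, which is then fixed by $\cB$ and by $\cC$, hence — using that $\cA$ is generated by (the images of) $\cB$ and lifts of $\cC$ — fixed by all of $\cA$, giving $\1\le\pi$ and finishing the proof. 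To see $\1\preceq\bar\pi$: take the net $(v_n)$ almost-invariant for $\pi$; the issue is that the $v_n$ need not lie in $H^{\cB}$. Here is where property (T) for $(\Gamma,N)$ is used quantitatively rather than just qualitatively — \emph{this is the main obstacle}. The standard classical fix is that property (T) for the pair gives a Kazhdan-type pair $(X_0,\varepsilon_0)$ so that any vector which is $(X_0,\varepsilon_0)$-almost-$N$-invariant lies within a controlled distance of $H^{\cB}$; projecting $v_n$ onto $H^{\cB}$ (orthogonal projection onto the closed invariant subspace) then yields non-zero vectors $\bar v_n\in H^{\cB}$ with $\|\bar v_n - v_n\|$ small, and almost-invariance of $(v_n)$ for $\pi$ passes to almost-invariance of $(\bar v_n/\|\bar v_n\|)$ for $\bar\pi$. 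So the real work is establishing the quantitative gap statement: \emph{property (T) for $(\Gamma,N)$ implies that there is a finite $X_0\subseteq\ir(G)$ and $\varepsilon_0>0$ such that every unit vector $v$ with $\|U^x(\eta\otimes v)-\eta\otimes v\|<\varepsilon_0\|\eta\|$ for all $x\in X_0$, $\eta\in H_x$, satisfies $\mathrm{dist}(v,H^{\cB})<\tfrac12$} (say). This is proved by contradiction: if it failed, one would assemble a representation (a direct sum, or an ultraproduct, of the offending examples) that almost contains invariant vectors but has no non-zero $\cB$-invariant vector, contradicting property (T) for the pair. The ultraproduct/direct-sum construction for CQG-algebra representations is routine given \Cref{re.1phi}; the only thing to verify is that ``no $\cB$-invariant vector'' is preserved under the construction, which follows because $\cB$-invariance of a vector in the direct sum forces $\cB$-invariance of each component. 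This completes the plan; all remaining steps are the bookkeeping of translating the classical argument through \Cref{def.t} and \Cref{re.1phi}.
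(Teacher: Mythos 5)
Your overall skeleton for the converse matches the paper's: pair (T) gives $H^{\cB}\neq 0$, normality (coming from the exactness of \Cref{eq:seq}) makes $H^{\cB}$ and $(H^{\cB})^\perp$ invariant under $\cA$, the restriction of $\pi$ to $H^{\cB}$ factors through $\cC$, and one must transfer almost-invariance to that restriction before invoking (T) for $K$. The divergence is in how you transfer almost-invariance, and that is where the genuine gap sits. You reduce to a uniform quantitative ``Kazhdan-pair'' statement for $(\Gamma,N)$ and propose to prove it by contradiction: collect offending pairs $(\pi_{X,\varepsilon},v_{X,\varepsilon})$ with $v_{X,\varepsilon}$ almost invariant but $\mathrm{dist}(v_{X,\varepsilon},H^{\cB})\geq \tfrac12$, form their direct sum, and claim it has no nonzero $\cB$-invariant vector ``because $\cB$-invariance of a vector in the direct sum forces $\cB$-invariance of each component.'' That inference yields no contradiction: nothing in the choice of $v_{X,\varepsilon}$ prevents the offending representations themselves from containing plenty of $\cB$-invariant vectors, so the direct sum may contain them too, and \Cref{def.relt} is not violated. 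To repair this you must first cut each $\pi_{X,\varepsilon}$ down to $(H^{\cB})^\perp$ (legitimate precisely because this subspace is $\cA$-invariant, by the normality argument), check that the projections of the $v_{X,\varepsilon}$ onto it have norm at least $\tfrac12$ and remain almost invariant (the projection commutes with each $U^x$), and only then take the direct sum.

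Once repaired, your route does work, but it is an unnecessary uniformization over all representations: the paper obtains the same conclusion inside the single representation $\pi$ at hand. If along some subnet the components $v_m^\perp$ in $H_0^\perp$ of the almost-invariant vectors stay bounded below in norm, their normalizations are almost invariant for $\pi|_{H_0^\perp}$, and pair (T) applied to that restriction yields a nonzero $N$-invariant vector in $H_0^\perp$, contradicting the definition of $H_0$; hence $\|v_n^\perp\|\to 0$ and the projections onto $H_0$ already witness $\1\preceq\pi_0$, with no Kazhdan pair and no direct sum or ultraproduct needed. A smaller point: at the end you deduce $\cA$-fixedness of a $K$-fixed vector from ``$\cA$ is generated by the images of $\cB$ and lifts of $\cC$''; no such generation statement is available for a general exact sequence, and none is needed --- since $\pi_0$ annihilates $\cA\ker(\varepsilon|_{\cB})=\ker(\cA\to\cC)$, a $\cC$-fixed vector for the factored representation satisfies $\pi(a)v=\varepsilon(a)v$ for all $a\in\cA$ directly. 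Your forward direction is fine and agrees with the paper, which records it as immediate.
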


Note that property (T) for $\Gamma$ is equivalent to property (T) for the pair $(\Gamma,\Gamma)$, and (T) for $N$ entails the property for the pair $(\Gamma,N)$.

\begin{proof}
  The direct implication is immediate, so we argue the converse. Suppose, in other words, that $K$ and $(\Gamma,N)$ both have property (T), and consider a representation $\pi:\cA\to B(H)$ with almost invariant vectors.

  Property (T) for the pair then implies that the Hilbert subspace $H_0\subseteq H$ consisting of $N$-invariant vectors is non-zero. $H_0$ is moreover $\Gamma$-invariant because $N\le \Gamma$ is normal in the sense of \cite[Theorem 2.7]{Wan13}. To see this, recall first that the normality implies in particular that the kernel of the surjection $\cA\to \cC$ is the (left and right) ideal (cf. \cite[Lemma 3.3]{Wan13})
  \begin{equation*}
   \cA\ker(\varepsilon|_{\cB}) = \ker(\varepsilon|_{\cB}) \cA.  
  \end{equation*}
  This equality then implies that we have
  \begin{equation*}
    \pi(\ker(\varepsilon|_{\cB}) \cA) H_0 = \pi(\cA\ker(\varepsilon|_{\cB})) H_0 = 0,
  \end{equation*}
  because $H_0$ being $N$-invariant means that $\cB$ acts on $H_0$ via $\varepsilon$. This means that $\ker(\varepsilon|_{\cB}) \pi(\cA)H_0 = 0$, and hence the space $\pi(\cA)H_0$ is contained in the space $H_0$ of $N$-fixed vectors in $H$; i.e. $H_0$ is $\Gamma$-invariant.  Denote the representation of $\cA$ (i.e. $\Gamma$) on $H_0$ by $\pi_0$.

  Next, we claim that the $\Gamma$-representation $\pi_0$ on $H_0$ almost contains invariant vectors in the sense of \Cref{def.t}. To verify this, choose an $(X,\varepsilon)$-invariant unit vector $v_n\in H$ for each $n=(X,\varepsilon)$ where $X$ is a finite set of irreducible $\cA$-comodules and $\varepsilon>0$. These vectors form a net as $X$ exhausts $\ir(\widehat{\Gamma})$ and $\varepsilon\to 0$.

  Now suppose there is a subnet $v_m$ whose projections $v_m^\perp: =Pv_m$ on $H_0^\perp$ have norms bounded below by some $C>0$. According to \Cref{def.t,re.1phi}, for every element $x\in X$ and unit vector $\eta\in H_x$ we have
  \begin{equation}\label{eq:evn}
    U^x(\eta\otimes v_n)-(\eta\otimes v_n)\to 0. 
  \end{equation}
  $H_0^\perp$ is invariant under $\cA$ via $\pi$, and hence the projection $P$ with range $H_0^\perp$ commutes with $\cA$. Because $u^x$ belongs to $B(H_x)\otimes \cA$, applying
  \begin{equation*}
    \mathrm{id}\otimes(\text{projection onto }H_0^\perp)
  \end{equation*}
  to \Cref{eq:evn} yields
  \begin{equation*}
    U^x(\eta\otimes v_n^\perp)-(\eta\otimes v_n^\perp)\to 0.
  \end{equation*}
  Restricting to the subnet $(v_m)_m$ and using $\|v_m^\perp\|\ge C$
  we obtain
  \begin{equation*}
    U^x\left(\eta\otimes \frac{v_m^\perp}{\|v_m^\perp\|}\right)-\left(\eta\otimes \frac{v_m^\perp}{\|v_m^\perp\|}\right)\to 0. 
  \end{equation*}
  In conclusion, the normalized projections $\frac{v_m^\perp}{\|v_m^\perp\|}$ attest to the existence of almost invariant vectors for the representation of $\Gamma$ on $H_0^\perp$. Property (T) for the pair $(\Gamma,N)$ then entails the existence of $N$-invariant vectors in $H_0^\perp$. This, however, contradicts the choice of $H_0$ as the space of {\it all} $N$-invariant vectors in $H$.

  The contradiction we have just obtained shows that the norms of the projections $v_n'$ of $v_n$ on $H_0$ converge to $1$ along the net. The same argument (projecting onto $H_0$ instead of $H_0^\perp$) then shows that these projections witness the fact that the $\Gamma$-representation $\pi_0$ almost contains invariant vectors.
  
  As observed before, the trivial action of $N$ means that $\cB$ acts on $H_0$ via the counit $\varepsilon$. On the other hand, the exact sequence \Cref{eq:seq} implies that the kernel of $\cA\to \cC$ is the ideal of $\cA$ generated by $\mathrm{ker}(\varepsilon|_{\cB})$ (cf. \cite[Lemma 3.3]{Wan13}) which is annihilated by $\pi_0$ as noted above, and hence the representation $\pi_0:\cA\to B(H_0)$ factors through $\cC=\cA(\widehat{K})$. The existence of almost invariant vectors and property (T) for $K$ then implies that $H_0$ contains non-zero $K$-fixed vectors; these would then also be fixed by $\Gamma$, finishing the proof that the latter has property (T).
\end{proof}


\bibliographystyle{plain}
\addcontentsline{toc}{section}{References}

\def\polhk#1{\setbox0=\hbox{#1}{\ooalign{\hidewidth
  \lower1.5ex\hbox{`}\hidewidth\crcr\unhbox0}}}

\end{document}